\newtheorem{theorem}{Theorem}
\newtheorem{lemma}{Lemma}
\newtheorem{remark}{Remark}
\newtheorem{example}{Example}
\title{Fourth-order compact difference schemes for the one-dimensional Euler-Bernoulli beam equation with damping term}
\author{
 Wenjie Huang \\
  School of Mathematics\\
  Sichuan University\\
  Chengdu, China, 610065 \\
  \texttt{jackson11235813@163.com} \\
   \And
 Hao Wang \\
  School of Mathematics\\
  Sichuan University\\
  Chengdu, China, 610065 \\
  \texttt{wangh@scu.edu.cn} \\ 
  \And
 Shiquan Zhang \\
  School of Mathematics\\
  Sichuan University\\
  Chengdu, China, 610065 \\
  \texttt{shiquanzhang@scu.edu.cn} \\  
  \And
 Qinyi Zhang \\
 School of Mathematics\\
  Sichuan University\\
  Chengdu, China, 610065 \\
  \texttt{zqy\_edu@163.com} \\
}
\begin{document}
\maketitle
\begin{abstract}
This paper proposes and analyzes a finite difference method based on compact schemes for the Euler-Bernoulli beam equation with damping terms. The method achieves fourth-order accuracy in space and second-order accuracy in time, while requiring only three spatial grid points within a single compact stencil. Spatial discretization is carried out using a compact finite difference scheme, with a variable substitution technique employed to reduce the order of the equation and effectively handle the damping terms. For the temporal discretization, the Crank–Nicolson scheme is applied. The consistency, stability, and convergence of the proposed method are rigorously proved. Numerical experiments are presented to verify the theoretical results and demonstrate the accuracy and efficiency of the method.
\end{abstract}


\keywords{Euler–Bernoulli beam equation \and  Compact
finite difference method \and Crank–Nicolson approximation \and Stability analysis}

\section{Introduction}
\label{Intro}

Euler-Bernoulli beam theory is an important theory in structural mechanics that describes the relationship between the bending moment and deflection of a beam under lateral load\cite{truesdell1960rational} and this type of structure is common in various industrial fields, including aviation, automobiles, railways, shipbuilding, and civil engineering\cite{han1999dynamics,li2020flexural}. 

The core of Euler-Bernoulli beam theory is the Euler-Bernoulli beam equation, which is a fourth-order parabolic partial differential equation and is usually difficult or impossible to obtain the analytical solution\cite{bauchau2009euler,haider2023exact}, so its numerical solution is crucial. Many people have tried to solve the Euler-Bernoulli beam equation in history and have given many numerical solutions. Ahmed provides a detailed revision of the Euler–Bernoulli and Timoshenko beam theories and their analytical and numerical applications in an appropriate and simplified manner\cite{ahmed2021euler}.

The numerical solution of the Euler-Bernoulli equation mainly includes the finite difference method and the finite element method. Fogang applied the finite difference method (FDM) to Euler-Bernoulli beams, using the fourth-order polynomial hypothesis (FOPH) and additional points to improve accuracy, and proposed a direct time integration method (DTIM) for forced vibration analysis\cite{fogang2021euler}. Twizell  invented a finite difference method for numerically solving fourth-order parabolic partial differential equations with one or two spatial variables\cite{twizell1983difference}. Atilla proposed the variational derivative method to solve the Euler-Bernoulli beam functional, eliminating the need for out-of-region points in finite difference methods, and verified its performance through numerical comparisons\cite{ozutok2010solution}. This method is derived from the multi-derivative method of second-order ordinary differential equations. Awrejcewicz studied the chaotic vibration of a flexible nonlinear Euler-Bernoulli beam under harmonic loads and various symmetric or asymmetric boundary conditions, and verified the reliability of the results obtained by the finite difference method and the finite element method\cite{awrejcewicz2011analysis}. Mohanty propose a high-order compact scheme with fourth-order spatial and second-order temporal accuracy for solving 2D fourth-order PDEs, avoiding ghost points and ensuring high accuracy and stability\cite{mohanty2017high}. Niiranen developed and analyzed Euler-Bernoulli beam models using strain gradient elasticity, compared different beam models, and validated their results with numerical simulations, focusing on size effects and stiffening behavior\cite{niiranen2019variational}. Pathak used a high-order compact finite difference scheme in MATLAB to approximate the solution of the Euler-Bernoulli beam equation for beam deflection analysis\cite{pathak2019high}.  Aouragh gave a numerical solution to the undamped Euler-Bernoulli beam equation based on a compact difference scheme\cite{aouragh2024compact}.

The finite element method is also used to solve the Euler-Bernoulli equation. Bardell introduced a hybrid h-p finite element technique for static analysis of Euler-Bernoulli beams\cite{bardell1996engineering}.  Wei use Hermite cubic finite elements to approximate the solutions of a nonlinear Euler-Bernoulli beam equation\cite{wei2012analytic}. Eltaher studied an effective finite element model for vibration analysis of nonlocal Euler-Bernoulli beams and proposed Eringen's nonlocal constitutive equations\cite{eltaher2013vibration}. Nguyen invented a hybrid finite element method for the static analysis of nanobeams and derived the governing equations of the Euler-Bernoulli beam theory by combining nonlocal theory\cite{nguyen2015mixed}.  Shang used the generalized finite element method (GFEM) to perform dynamic analysis on one-dimensional rod and Euler-Bernoulli beam problems and analyzed the free vibration problem of beams to evaluate the robustness and efficiency of elements\cite{shang2016dynamic}.Zakeri used the basic displacement function (BDF) to perform structural analysis of nanobeams using the finite element method based on Eringen nonlocal elasticity and Euler Bernoulli beam theory\cite{zakeri2016analysis}.  Lin presents an error analysis of a Hermite cubic immersed finite element (IFE) method for solving interface problems of the differential equation modeling a Euler-Bernoulli beam made up of multiple materials together with suitable jump conditions at material interfaces\cite{lin2017error}. The nonlocal Euler-Bernoulli motion equations were derived on the basis of variational statements. Wang et al. studied the displacement recovery technique through superconvergent patches and developed an adaptive finite element method for the structural characteristic problem of cracked Euler-Bernoulli beams\cite{wang2018adaptive}. ABRO presents a finite element numerical method for Euler-Bernoulli beams with variable coefficients\cite{abro2020numerical}.

Among the existing numerical methods for solving fourth-order parabolic partial differential equations, the finite difference method is favored because of its simple theoretical derivation, relatively easy implementation, flexibility and accurate numerical solution.  However, the damping term is not considered in many numerical methods for solving the Euler-Bernoulli beam equation. 

Therefore, the aim of our paper is to develop a numerical method to solve the Euler-Bernoulli beam equation with the damping term. This method employs a compact finite difference for the spatial derivative using a 3-point compact stencil and a second-order Crank–Nicolson approximation for the temporal derivative. Our proof to stability is general and can provide guidance to prove the stability and consistence for similar difference schemes.

The outline of this paper is as follows. In the second section, we present the process of decomposing \eqref{ebe} into a system of two second-order equations. Furthermore, we approximate the second-order spatial derivative using a fourth-order compact finite difference scheme, from which we obtain a system of ordinary differential equations. This system is solved using the Crank–Nicolson scheme. In the third section, we discuss the convergence, stability and convergence of the proposed scheme by the matrix method, which is shown to be unconditionally stable and second-order accurate in time and fourth-order accurate in space. In the fourth section, we present numerical results for three examples and compare errors with those from existing works in the literature, confirming the order of convergence. Finally, in the last section, we mention the conclusion of the implementation and the superiority of the proposed method based on numerical results from the problems.

\section{The Damping Model And Its Compact Difference scheme}
\subsection{Euler-Bernoulli beam mode with damping term}

We consider a one-dimensional Euler-Bernoulli beam model with damping in the domain $Q_T =[0, L]\times [0, T]$, where $L$ is the beam length and $T$ represents the \textit{total time}. The problem with the model in the domain is to find $u=u(x,t)$ that satisfies the following equation with boundary conditions:

\begin{empheq}[left=\empheqlbrace]{align}
     &\frac{\partial^2 }{\partial x^2}\left(E(x)I(x)\frac{\partial^2 u(x,t)}{\partial x^2}\right) + \rho(x)\frac{\partial^2u(x,t)}{\partial t^2} + c(x)\frac{\partial u(x,t)}{\partial t} = f(x,t),\ (x,t) \in Q_T,\label{ebe}\\
     &u(x,0) = \xi_1(x), u_t(x,0) = \xi_2(x),\label{ebe-initial-condition}\\
     &u(0,t)  = \mu_0(t),u(L,t)  = \mu_1(t), u_{xx}(0,t)= \mu_2(t),u_{xx}(L,t)  = \mu_3(t)\label{ebe-boundary-condition}.
\end{empheq}

where $u_t:=\frac{\partial u}{\partial t}$ and $u_{xx}:= \frac{\partial^2 u}{\partial x^2}$. \eqref{ebe-initial-condition} represents the initial condition and \eqref{ebe-boundary-condition} represents the boundary condition with the assumption that the beam is simply supported (or hinged) at its ends. In equation \eqref{ebe}, $u$ represents the displacement of a point on the beam at position $x$ and time $t$ relative to the equilibrium position,  $E$ represents the elastic modulus of the material, $I$ represents the section inertia moment, $\rho$ represents the density of material,    $c$ represents the damping coefficient, and $f$ represents the external force . In this paper, we only consider the case where $E$, $I$, $\rho$, and $c$ are all constants.

According to the first and second terms of equation \eqref{ebe}, the solution $u$ needs to satisfy $u\in C^4$ in the $x$ direction and $u\in C^2$ in the $t$ direction, and therefore we define the space 
\begin{equation}
    \mathcal{U} = \left\{u(x,t) \in Q(x,t) \Big|\frac{\partial^4 u}{\partial x^4} \text{and} \frac{\partial^2 u}{\partial t^2}\text{exist},u(x,t) \ \text{satisfy the equation \eqref{ebe}, \eqref{ebe-initial-condition} and \eqref{ebe-boundary-condition}} \right\}
\end{equation}
The force term $f=f(x,t)$ is generally assumed to satisfy $f\in C^0$ since the difference schemes that this paper focuses on are required. The third term of equation \eqref{ebe} is also called \textit{damping term} because it causes the dissipation of energy.

\subsection{Compact difference method}
\label{discrete}
We consider discretizing the problem onto a uniform grid. Let $N_x, N_T$ be a positive integer, we uniformly divide $\Omega$ into $N_x$ parts in $x$ direction and $N_T$ parts in $t$ direction, denote the step size of spatial variable $x$ by $h = \Delta x = \frac{L}{N_x}$ and the step size of time variable $t$ by $\Delta t = \frac{T}{N_T}$, and write $(\Delta x)^y, (\Delta t)^y$ as $\Delta x^y, \Delta t^y$. We denote the corresponding admissible set of spatial coordinates by $\mathbf{X} :=\left\{ 0 , 1,...,N_x\right\}$ and time coordinates by $\mathbf{T}:= \left\{ 0 , 1,...,N_T\right\}$.
%
%
%
%
For a function $v\in \mathcal{U}$, we define the finite difference operators 
\begin{equation}
        \delta^c_kv^n_i:=\sum^{k}_{j=-k}\hat{b}_jv_{i+j},
\end{equation}
where $\hat{b}_j$ is fixed corresponding to center difference scheme when $N,c$ are fixed, and $v^n_i=v(i\Delta x, n\Delta t), i\in \mathbf{X}, n\in\mathbf{T}$. Again, we define compact difference operators of $c$-order derivative
\begin{align}
    \label{compact_operator}
    D^c_N v^n_i:= \sum_{j=-N}^{N}a_j \left( \frac{\partial^c v}{\partial x^c} \right)^{n}_{i+j},\ a_j>0 \ ,\ N \leq i \leq N_x-N.
\end{align}
where $a_j$ represents the weight of the term in compact scheme and notice that the number of points we used is $2N+1 $ by definition \eqref{compact_operator}.
Now the compact difference scheme of $c$-order derivative is able to be expressed in the form of
\begin{equation}
    \label{compactscheme}
    D^c_N v^{n}_i= \frac{1}{\Delta x^c}\sum_{j=1}^{\hat{N}}b_j \delta^c_j v^{n}_i,
\end{equation}
the integer $N, \hat{N}$ is defined to satisfies the consistence
\begin{equation}
    \label{compact_consistence}
     D^c_N v^{n}_i - \frac{1}{\Delta x^c}\sum_{j=1}^{\hat{N}}b_j \delta^c_j v^{n}_i = O(\Delta x^k).
\end{equation}

where $k$ is fixed and \eqref{compactscheme}-\eqref{compact_consistence} mean that we use finite difference scheme to approximate the weighted average of the derivatives of multiple points instead of the derivative of one point. The advantage of this is that a higher order of convergence can be obtained with fewer discrete points, especially in some cases of uniform grid. We use Taylor series to deduce the the integer $N, \hat{N}$ with a fixed appropriate integer $k$. 

For convenience, we also define a operator $\tilde{D}_N^c $:
\begin{equation}
    \tilde{D}_N^c  v_i^n := \sum_{j=-N}^{N}a_j v^{n}_{i+j},\ a_j>0 \ ,\ N \leq i \leq N_x-N.
\end{equation}
where $a_j$ is the coefficient of $D^c_Nv^n_i$ defined by \eqref{compact_operator}. That means we ignore the differential effect when we use the operator $ \tilde{D}_N^c$ which corresponds to operator $D_N^c$.

In this paper, the scheme we used of second derivative with $k=4$ is
\begin{equation}
    \label{D23}
    D^2_1v^{n}_i = \frac{1}{\Delta x^2}\delta^2_1 v^{n}_i,
\end{equation}
here we choose $N=1, \hat{N}=1$ for $k=4$ and expand \eqref{D23} in uniform grid with $b_1 = 1$ and $a_1 = a_{-1} = 1/12, a_0 = 5/6$
\begin{equation}
    \label{parameter}
    \frac{1}{12} \left (\frac{\partial^2 v}{\partial x^2}  \right ) ^n_{i+1}+ \frac{5}{6}\left (\frac{\partial^2 v}{\partial x^2}  \right ) ^n_{i}+\frac{1}{12} \left (\frac{\partial^2 v}{\partial x^2}  \right ) ^n_{i-1}=
    \frac{1}{\Delta x^2}\delta^2_1 v^{n}_i.
\end{equation}

With the scheme above,  we can discrete the equation and solve it.

\subsection{Compact difference scheme separating first and second time derivatives}
First, we replace the variables with the following form and transform the fourth-order equation into a second-order equation system,
\begin{empheq}[left=\empheqlbrace]{equation}
\begin{aligned}
    \phi(x,t) &=  u_t(x,t),\\
    \psi(x,t) &= u_{xx}(x,t).
\end{aligned}
\label{variable_assume}
\end{empheq}
Then equation \eqref{ebe} turns to
\begin{empheq}[left=\empheqlbrace]{align}
    EI\psi_{xx}(x,t) &=  -\rho\phi_t(x,t) - c\phi(x,t) - f(x,t), \label{eq1}\\ 
    \phi_{xx}(x,t) &= \psi_t(x,t). \label{eq2}   
\end{empheq}
In the variable substitution, $\phi(x,t)$ represents the velocity of the beam, and $\psi(x,t)$ represents the bending moment value of the beam. For simplicity in analysis, we rewrite the equation \eqref{eq1}-\eqref{eq2} with operators in the following form:
\begin{equation}
    \label{lu_f}
    \mathcal{L}H=\mathcal{F},
\end{equation}
where $H=H\left(\phi, \psi \right)$ and $\mathcal{F}$ represents the force term. Exactly $\mathcal{L}$ is a vector operator with two components
\begin{align}
    \mathcal{L} = \begin{pmatrix}
        \mathcal{\bar{L}}\\
        \mathcal{\bar{\bar{L}}}
    \end{pmatrix},
\end{align}
where
\begin{align}
    \mathcal{\bar{L}}(\phi ,\psi) &= -EI\psi_{xx} -\rho\phi_t - c\phi,\\
    \mathcal{\bar{\bar{L}}}(\phi ,\psi) &=  \phi_{xx} - \psi_t.
\end{align}
To get an spatial semidiscrete scheme we apply compact difference scheme \eqref{compactscheme} to \eqref{eq1}, notice that here $c = 2$
\begin{align}
    \label{eq1_cs1}
    D^2_1 \psi^{n}_i = \frac{1}{\Delta x^2} \delta^2_1 \psi^{n}_i,
\end{align}
by the definition of compact scheme
\begin{align}
D^2_1 EI\psi^{n}_i = \tilde{D}^2_1\left ( f-\rho\phi_t -c \phi \right ) ^{n}_{i},
    \label{eq1_cs2}
\end{align}
Where $a_j$ is already given by left terms of equation \eqref{parameter}. And apply the same way to \eqref{eq2} and notice that
\begin{align}
  D^2_1 \phi^{n}_i = \tilde{D}^2_1\left (\psi_t \right ) ^n_{i}.
    \label{eq2_cs1}
\end{align}
We deduce the compact scheme of equation system \eqref{eq1}-\eqref{eq2} by combining \eqref{eq1_cs1}-\eqref{eq1_cs2} and \eqref{eq2_cs1}
\begin{empheq}[left=\empheqlbrace]{align}
\tilde{D}^2_1 ( f-\rho\phi_t - c\phi ) ^{n}_{i} &= \frac{1}{\Delta x^2} \delta^2_1 EI\psi^{n}_i,\label{cs1}\\
        \tilde{D}^2_1 \left (\psi_t  \right )^{n}_i &= \frac{1}{\Delta x^2}\delta^2_1 \phi^{n}_i\label{cs2},
\end{empheq}
rewrite \eqref{cs1} and \eqref{cs2} as systems of ordinary differential equations and implies
\begin{empheq}[left=\empheqlbrace]{align}
    \bm{A}\left(\rho\bm{\Phi}_t + c\bm{\Phi} - \bm{f}\right) &= \bm{B}\bm{\Psi},\label{linearsystem1}\\
    \bm{A}\bm{\Psi}_t &= -\bm{B}\bm{\Phi},\label{linearsystem2}
\end{empheq}
where we define
\begin{align}
    \bm{\Phi} &:= \left( \phi_1,\phi_2,...,\phi_{N_x-1}\right)^T,\bm{\Psi} := \left( \psi_1,\psi_2,...,\psi_{N_x-1} \right)^T,\bm{f} := \left( f_1,f_2,...,f_{N_x-1} \right)^T,\\
    \bm{\Phi}_t &:=\frac{\partial }{\partial t} \bm{\Phi} = \left(\left ( \frac{\partial \phi}{\partial t} \right ) _1,\left ( \frac{\partial \phi}{\partial t} \right ) _2,...,\left ( \frac{\partial \phi}{\partial t} \right ) _{N_x-1} \right)^T,\bm{\Psi}_t := \frac{\partial }{\partial t} \bm{\Psi} =\left( \left ( \frac{\partial \psi}{\partial t} \right ) _1,\left ( \frac{\partial \psi}{\partial t} \right ) _2,...,\left ( \frac{\partial \psi}{\partial t} \right ) _{N_x-1} \right)^T,
\end{align}
where  $\phi_i(t) = \phi(i\Delta x,t)$,  the coefficient matrix $\bm A, \bm B$ is given by the following equation
\begin{align}
    \bm A &:= \mathrm{diag}_{N_x -1}\left( \frac{1}{12},\frac{5}{6},\frac{1}{12} \right),\\
    \bm B &:= \frac{1}{\Delta x^2}\mathrm{diag}_{N_x -1}\left(-1,2,-1  \right).
\end{align}

the $\mathrm{diag}_{N_x -1}(a_{-1},a_0,a_1)$ is redefined here as tridiagonal matrix of $(N_x -1)\times(N_x -1)$ size:
\begin{equation}
    \mathrm{diag}_{N_x -1}\left( a_{-1}, a_0, a_1 \right) := \begin{pmatrix}
        a_0    & a_1  &      &     &  & & &  \\
        a_{-1} & a_0  & a_1  &    &  & & &    \\
          &a_{-1} & a_0 & a_1 &  & & &   \\
                      &   &     & ... & & & &   \\
                & &  & a_{-1}  & a_0& a_1 &  \\
                & & &   & a_{-1}& a_0 &a_1  \\
                & & & &  & a_{-1}  & a_0
    \end{pmatrix}_{(N_x -1)\times(N_x -1)}
\end{equation}
Notice that \eqref{linearsystem1}-\eqref{linearsystem2} can be organized into one linear system by combining $\bm{\Phi}$ and $\bm{\Psi}$ into a new vector,i.e.
\begin{align}
    \label{linearsystem}
    \mathcal{A}\bm{U}_t = \mathcal{B}\bm{U}+\bm{F},
\end{align}
where
\begin{align}
    \bm{U} := \begin{pmatrix}        \bm \Phi \\        \bm \Psi    \end{pmatrix},
    \bm{U}_t := \frac{\partial}{\partial t}\bm{U}
    =\begin{pmatrix}      \bm \Phi_t\\        \bm \Psi_t    \end{pmatrix},
    \bm{F} :=  \begin{pmatrix}\bm {F}_1 \\ \bm{F}_2   \end{pmatrix},
\end{align}
\begin{align}
\label{eq:the definition of matrix A and B}
    \mathcal{A} &:= \begin{pmatrix}
        \rho\bm A & \mathbf{0}\\
        \mathbf{0} & \bm A
    \end{pmatrix},
    \mathcal{B} := \begin{pmatrix}
        -c\bm A & EI\bm B\\
        -\bm B & \mathbf{0}
    \end{pmatrix},
\end{align}
and
\begin{align*}
    \bm{F}_1 &= \left( \alpha + \tilde{D}^2_1f_1,\tilde{D}^2_1f_2,\dots,\tilde{D}^2_1f_{N_x-2},\tilde{D}^2_1f_{N_x-1}+\beta \right)^T,\\
    \bm{F}_2 &= \left( \alpha',0,\dots,0,\beta' \right)^T, \\
\end{align*}
where $\alpha,\beta,\alpha',\beta'$ represents the boundary condition:
\begin{align*}
    \alpha & = - EI \frac{\psi_0(t)}{h^2}  - \rho\frac{\phi_0'(t)}{12}  - c\frac{\phi_0(t)}{12},  \\
    \beta &= -EI\frac{\psi_{N_x}(t)}{h^2}  - \rho\frac{\phi_{N_x}'(t)}{12}  - c\frac{\phi_{N_x}(t)}{12}, \\
    \alpha' &= \frac{\phi_0(t)}{h^2} - \frac{\psi_0'(t)}{12}, \\
    \beta' &= \frac{\phi_{N_x}(t)}{h^2} - \frac{\psi_{N_x}'(t)}{12} .
\end{align*}
In order to maintain the stability in time, we use Crank-Nicolson scheme\cite{sun2006efficient}, which yields
\begin{align}
    \label{The final Scheme}
    \mathcal{A}\left( \frac{\bm{U}^{n+1}-\bm{U}^{n}}{\Delta t} \right) = \mathcal{B}\left( \frac{\bm{U}^{n+1}+\bm{U}^{n}}{2} \right) + 
    \left( \frac{\bm{F}^{n+1}+\bm{F}^{n}}{2} \right).
\end{align}
We can rewrite the difference scheme above the same as \eqref{lu_f} in point-to-point form
\begin{equation}
\mathcal{L}^n_jH^n_j=G^n_j,\label{discrete_lu_f}
\end{equation}
where $H^n_j=H(\phi^n_j, \psi^n_j)$ and $G^n_j$ represents the force term. Notice that $\mathcal{L}_j^n$ is the same as $\mathcal{L}$ and also has two components 

\begin{align}
    \mathcal{L}_j^n = \begin{pmatrix}
        \mathcal{\bar{L}}_j^n\\
        \mathcal{\bar{\bar{L}}}_j^n
    \end{pmatrix},
\end{align}
where
\begin{align*}
        \mathcal{\bar L}^n_jH^n_j &:= \frac{\rho}{\Delta t}\tilde{D}^{2}_1 \left ( \phi_i^{n+1} - \phi_i^n \right )  + \frac{c}{2}\tilde{D}^{2}_1\left ( \phi_i^{n+1} + \phi_i^n \right )  + \frac{EI}{2\Delta x^2}\left ( \delta^2_1\psi^{n+1}_i + \delta^2_1\psi^{n}_i \right ), \\
        \bar{G}^n_j& := \frac{1}{2}\tilde{D}^{2}_1\left ( f^{n+1}_i + f^n_i \right ) .
\end{align*}
and
\begin{align*}
        \mathcal{\bar{ \bar{L}}}^n_jH^n_j &:= \frac{1}{\Delta t}\tilde{D}^{2}_1\left ( \psi_i^{n+1} - \psi_i^n \right )  - \frac{1}{2}\tilde{D}^{2}_1\left ( \phi_i^{n+1} + \phi_i^n \right ) , \\
        \bar{\bar{G}}^n_j& := 0 
\end{align*}
which are the point-by-point form corresponding to \eqref{The final Scheme}.

Since matrix $\mathcal{A}$ is invertible, the equation \eqref{The final Scheme} can be rewritten as
\begin{equation}
    \label{cn_scheme}
    \left( \bm I - \frac{\Delta t}{2}\mathcal{A}^{-1}\mathcal{B} \right)\bm{U}^{n+1} = \left( \bm I + \frac{\Delta t}{2}\mathcal{A}^{-1}\mathcal{B} \right)\bm{U}^n + \frac{\Delta t}{2}\mathcal{A}^{-1}\left(\bm{F}^{n+1}+\bm{F}^{n}\right) .
\end{equation}

Solving the above equation yields vector $\bm U ^n$, noting that vector $\bm \Psi^n \in \bm U ^n$ is the second-order spatial derivative of the original solution from \eqref{variable_assume}, and recall the compact difference scheme with $c=2$

\begin{align}
    D_1^2 u^{n}_i &= \frac{1}{\Delta x^2}\delta^2_1 u^{n}_i,\\
    \label{eq:tanslated U to u}
\end{align}
again, rewrite it as a linear system using the matrix $\bm A$ and $\bm B$
\begin{align}
    \bm A \bm \Psi ^n = -\bm B  {\bm C} ^n.
\end{align}
where ${\bm C} ^n := \left( u^{n}_1,...,u^{n}_{N_x-1} \right)^T$.

\section{Consistency, Stability and Convergence}

In this section, we give analytical proof of the consistency, stability and convergence of the difference scheme  \eqref{The final Scheme}, thereby demonstrating that the difference scheme has excellent properties.

\subsection{Consistency Analysis}
Before proving the consistency of the final scheme and the original equation, we shall prove the consistency of \eqref{discrete_lu_f} first.
\begin{lemma}
The compact scheme \eqref{discrete_lu_f} is pointwise consistent with the partial differential equation \eqref{lu_f} at point $(x,t)$ and 
\begin{equation}
   \left\| \left(\mathcal{L}H-F\right)^n_j-\left(\mathcal{L}^n_j H^n_j-G^n_j\right)\right\|_{\infty}=O(\Delta x^4)+O(\Delta t^2).
\end{equation}
\label{le:consistency of middle shceme}
\begin{proof}
Note that the scheme \eqref{discrete_lu_f}  has two part schemes so we first  expand $\mathcal{\bar L}^n_jH^n_j - \bar{G}^n_j$ into Taylor series at point $(x, t)$,  we have
\begin{align}
    \mathcal{\bar L}^n_jH^n_j - \bar{G}^n_j &=c\phi_j^{n+1}+\rho \left(\frac{\partial \phi}{\partial t}\right)_j^{n+1} +EI \left(\frac{\partial^2 \psi}{\partial x^2}\right)_j^{n+1}-f_j^{n+1} \\
&-\frac{\Delta t}{2}\left [ c\left(\frac{\partial \phi}{\partial t}\right)_j^{n+1}+\rho\left(\frac{\partial^2 \phi}{\partial t^2}\right)_j^{n+1} +EI\left(\frac{\partial^3 \psi}{\partial t\partial x^2}\right)_j^{n+1}-\left(\frac{\partial f}{\partial t}\right)_j^{n+1}  \right ] \\
&+\frac{(\Delta x)^2}{12}\left[c\left(\frac{\partial^2 \phi}{\partial x^2}\right)_j^{n+1} +\rho\left(\frac{\partial^3 \phi}{\partial t\partial x^2}\right)_j^{n+1}+EI\left(\frac{\partial^4 \psi}{\partial x^4}\right)_j^{n+1}-\left(\frac{\partial^2 f}{\partial x^2}\right)_j^{n+1}\right]\\
&-\frac{\Delta t(\Delta x)^2}{24}\left[c\left(\frac{\partial^3 \phi}{\partial t\partial x^2}\right)_j^{n+1}+\rho\left(\frac{\partial^4 \phi}{\partial t^2\partial x^2}\right)_j^{n+1} +EI\left(\frac{\partial^5 \psi}{\partial t\partial x^4}\right)_j^{n+1}-\left(\frac{\partial^3 f}{\partial t\partial x^2}\right)_j^{n+1}\right]\\
&+O(\Delta t^2)+O(\Delta x^4),
\end{align}
and then use the  equation \eqref{eq1} to eliminate the relevant terms to obtain
\begin{equation}
   \mathcal{\bar L}^n_jH^n_j - \bar{G}^n_j = O(\Delta x^4)+O(\Delta t^2).\label{consistence1}
\end{equation}
We can get the following result by similar derivation to the second scheme 
\begin{equation}
       \mathcal{\bar {\bar{L}}}^n_jH^n_j- \bar{\bar{G}}_j^n = O(\Delta x^4)+O(\Delta t^2).\label{consistence2}
\end{equation}
Combining \eqref{consistence1} and \eqref{consistence2} leads to the consistence:
\begin{align}
    \left\| \left(\mathcal{L}H-F\right)^n_j-\left(\mathcal{L}^n_j H^n_j-G^n_j\right)\right\|_{\infty} 
    &=\left\|\begin{pmatrix}
        (\mathcal{ \bar L}H-\bar F)^n_j-(\mathcal{ \bar L}^n_j H^n_j-\bar G^n_j)\\
        (\mathcal{ \bar {\bar L}}H - \bar{\bar{G}})^n_j-(\mathcal{ \bar{\bar L}}^n_j H^n_j - \bar{\bar{G}}_j^n)
    \end{pmatrix}\right\|_{\infty}\\
    &=    \max \{ (\mathcal{ \bar L}H-\bar F)^n_j-(\mathcal{ \bar L}^n_j H^n_j-\bar G^n_j),(\mathcal{ \bar {\bar L}}H - \bar{\bar{G}})^n_j-(\mathcal{ \bar{\bar L}}^n_j H^n_j - \bar{\bar{G}}_j^n)\}\\
    &=\mathcal{O}(\Delta x^4)+\mathcal{O}(\Delta t^2).
\end{align}

\end{proof}
It shows the scheme \eqref{The final Scheme} is second-order accurate in time and fourth-order accurate in space.
\end{lemma}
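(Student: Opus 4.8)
The plan is to read the claim as a truncation-error estimate. Since $H=(\phi,\psi)$ is the \emph{exact} solution, the continuous residual vanishes identically, $(\mathcal{L}H-\mathcal{F})^n_j=0$ at every grid node, so the quantity to bound is just $\mathcal{L}^n_jH^n_j-G^n_j$. Because $\mathcal{L}^n_j$ splits into the two scalar operators $\mathcal{\bar L}^n_j$ and $\mathcal{\bar{\bar L}}^n_j$, and the $\infty$-norm of the pair is their maximum, it suffices to estimate the two residuals $\mathcal{\bar L}^n_jH^n_j-\bar G^n_j$ and $\mathcal{\bar{\bar L}}^n_jH^n_j-\bar{\bar G}^n_j$ one at a time. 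The whole argument rests on two facts already in hand: the compact operator is fourth-order consistent, $\tfrac{1}{\Delta x^2}\delta^2_1 v^n_i=\tilde D^2_1 (v_{xx})^n_i+O(\Delta x^4)$ by \eqref{compact_consistence} with the weights in \eqref{parameter}; and the Crank--Nicolson stencils are second-order in time.

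For the first component I would center every stencil at the half level $t_{n+1/2}$. There the two $\phi$-operations expand as $\tfrac1{\Delta t}(\phi^{n+1}_i-\phi^n_i)=(\phi_t)^{n+1/2}_i+O(\Delta t^2)$ and $\tfrac12(\phi^{n+1}_i+\phi^n_i)=\phi^{n+1/2}_i+O(\Delta t^2)$, while the time-averaged spatial term satisfies $\tfrac{EI}{2\Delta x^2}(\delta^2_1\psi^{n+1}_i+\delta^2_1\psi^n_i)=\tfrac{EI}{\Delta x^2}\delta^2_1\psi^{n+1/2}_i+O(\Delta t^2)$, so that all odd-order time errors cancel and only $O(\Delta t^2)$ survives. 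Applying the compact identity to this spatial term replaces $\tfrac{EI}{\Delta x^2}\delta^2_1\psi^{n+1/2}_i$ by $EI\,\tilde D^2_1(\psi_{xx})^{n+1/2}_i+O(\Delta x^4)$. By linearity of the averaging operator $\tilde D^2_1$, the main term then collapses to $\tilde D^2_1\big[\rho\phi_t+c\phi+EI\psi_{xx}-f\big]^{n+1/2}_i$, i.e. $\tilde D^2_1$ applied to the residual of the first reduced equation \eqref{eq1} (equivalently of \eqref{ebe}); since that residual is identically zero for the exact solution, the term vanishes outright, leaving $\mathcal{\bar L}^n_jH^n_j-\bar G^n_j=O(\Delta x^4)+O(\Delta t^2)$. (Equivalently one may expand about $t_{n+1}$; then the leading part together with the $\tfrac{\Delta t}{2}$, $\tfrac{\Delta x^2}{12}$ and $\tfrac{\Delta t\,\Delta x^2}{24}$ brackets must each be recognized as a $\partial_t$-, $\partial_{xx}$- or $\partial_t\partial_{xx}$-derivative of that same residual and eliminated through \eqref{eq1}.)

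The second component is identical in structure, now driven by \eqref{eq2}. After the half-level time expansion and the compact substitution on $\tfrac1{\Delta x^2}\delta^2_1\phi$, the main term becomes $\tilde D^2_1\big[\psi_t-\phi_{xx}\big]^{n+1/2}_i$, again $\tilde D^2_1$ of a residual that vanishes identically, so $\mathcal{\bar{\bar L}}^n_jH^n_j-\bar{\bar G}^n_j=O(\Delta x^4)+O(\Delta t^2)$. Taking the maximum of the two scalar bounds in the $\infty$-norm, exactly as in the displayed chain closing the lemma, yields the stated $O(\Delta x^4)+O(\Delta t^2)$.

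I expect the only genuine difficulty to be certifying that the spatial order is truly four rather than two. The bare second difference $\tfrac1{\Delta x^2}\delta^2_1$ approximates $v_{xx}$ at a single point only to $O(\Delta x^2)$; fourth order is recovered \emph{only} because the scheme weighs it against the derivative average $\tilde D^2_1 v_{xx}$, so \eqref{compact_consistence} must be invoked with the precise weights $(\tfrac1{12},\tfrac56,\tfrac1{12})$ and the $\phi$ and $f$ terms must carry the matching $\tilde D^2_1$ average for the residual to factor out cleanly. Getting that pairing right---and, along the $t_{n+1}$ route, correctly identifying every leftover bracket as a space/time derivative of the PDE residual so that the smoothness of $u\in\mathcal{U}$ permits differentiating \eqref{eq1}--\eqref{eq2} and cancelling it---is the step I would set down most carefully, since it is exactly what the phrase ``use the equation \eqref{eq1} to eliminate the relevant terms'' compresses.
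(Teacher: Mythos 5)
Your proposal is correct and follows essentially the same route as the paper: split the residual into the two component schemes, Taylor-expand, invoke the fourth-order compact identity, cancel the leading terms using equations \eqref{eq1}--\eqref{eq2} satisfied by the exact solution, and take the maximum of the two scalar bounds in the $\infty$-norm. The only real difference is organizational: you center the time expansion at $t_{n+1/2}$ so the PDE residual cancels in a single stroke, whereas the paper expands at $t_{n+1}$ and must recognize each correction bracket (the $\tfrac{\Delta t}{2}$, $\tfrac{\Delta x^2}{12}$, $\tfrac{\Delta t\,\Delta x^2}{24}$ terms) as a space/time derivative of that same vanishing residual --- precisely the variant you yourself note in parentheses.
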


\begin{theorem}[Consistency]
The final scheme is pointwise consistent with the origin equation \eqref{ebe} and is second-order accurate in time and fourth-order accurate in space.
\begin{proof}
    It is natural to deduce that scheme \eqref{eq:tanslated U to u} is consistent with the equation
    \begin{align}
    u_{xx} (x,t)= \psi(x,t).
    \end{align}
    since \eqref{D23} has already ensured it and is fourth-order accurate in space. From lemma \ref{le:consistency of middle shceme} we know that scheme \eqref{discrete_lu_f} is consistent with equations \eqref{eq1}, which is actually transformed from equation \eqref{ebe}, so we must have the final scheme where we get numerical solution $u(x_i,t_n)$ is pointwise consistent with the origin equation \eqref{ebe},  and is second-order accurate in time and fourth-order accurate in space.
\end{proof}
    
\end{theorem}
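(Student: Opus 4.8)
The plan is to establish consistency by composition, exploiting the fact that the variable substitution \eqref{variable_assume} is an \emph{exact} equivalence at the continuous level. First I would record that any $u \in \mathcal{U}$ solving \eqref{ebe} produces, via $\phi = u_t$ and $\psi = u_{xx}$, a pair $(\phi,\psi)$ solving the decomposed system \eqref{eq1}--\eqref{eq2}, and conversely that any $(\phi,\psi)$ solving that system together with the defining relations reconstructs a solution of \eqref{ebe}. This reduces the task to controlling the local truncation error of the two discrete ingredients that together produce $u$: the evolution scheme \eqref{discrete_lu_f} for $(\phi,\psi)$, and the recovery scheme \eqref{eq:tanslated U to u} that reconstructs $u$ from the computed $\psi$.

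For the recovery step I would insert the exact solution into \eqref{eq:tanslated U to u} and use that it is precisely the compact second-derivative formula \eqref{D23}; by the consistency relation \eqref{compact_consistence} with $c=2$ and $k=4$, this reproduces $u_{xx}$ with a purely spatial error $O(\Delta x^4)$ and no temporal contribution. The evolution step is handled directly by Lemma \ref{le:consistency of middle shceme}, which already gives that \eqref{discrete_lu_f} is pointwise consistent with \eqref{lu_f} (equivalently \eqref{eq1}--\eqref{eq2}) with error $O(\Delta x^4)+O(\Delta t^2)$.

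The combination step is then a matter of adding the two truncation errors. Substituting the exact $u$ into the full discrete procedure, the total pointwise residual is bounded by the sum of the recovery error $O(\Delta x^4)$ and the evolution error $O(\Delta x^4)+O(\Delta t^2)$, which is again $O(\Delta x^4)+O(\Delta t^2)$. Since the decomposed system is equivalent to \eqref{ebe}, this residual is exactly the truncation error of the final scheme with respect to the original fourth-order equation, yielding second-order accuracy in time and fourth-order accuracy in space.

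The part I expect to require the most care is justifying that the two consistency statements compose \emph{without loss of order}. One must verify that the spatial error incurred in reconstructing $u$ from $\psi$ is not amplified when it is fed back through the relation $\psi = u_{xx}$, and that all higher derivatives of $u$ appearing in the Taylor expansions of both schemes are genuinely available and bounded — which is precisely why the solution is taken in the space $\mathcal{U}$, where $\partial^4 u/\partial x^4$, $\partial^2 u/\partial t^2$, and the further mixed derivatives used in Lemma \ref{le:consistency of middle shceme} are assumed to exist. Making the passage from ``consistent with the system plus recovery relation'' to ``consistent with the fourth-order equation'' fully rigorous is the only genuinely nonroutine point; the remainder is Taylor bookkeeping already carried out in the lemma.
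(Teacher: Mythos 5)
Your proposal is correct and follows essentially the same route as the paper's own proof: both rest on Lemma \ref{le:consistency of middle shceme} for the evolution scheme \eqref{discrete_lu_f}, on the fourth-order accuracy of the compact formula \eqref{D23} for the recovery scheme \eqref{eq:tanslated U to u}, and on the exact equivalence of the decomposed system \eqref{eq1}--\eqref{eq2} with the original equation \eqref{ebe} to conclude. Your write-up is merely more explicit than the paper's about how the two truncation errors add without loss of order and about the regularity in $\mathcal{U}$ needed for the Taylor expansions, but the underlying argument is the same.
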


\subsection{Stability Analysis}

Since the right end term matrix of \eqref{cn_scheme} has no effect on the stability of the scheme, we will simplify the equation to the following form when analyzing its stability
\begin{equation}
    \mathcal{Q}_1\bm U ^{n+1} =\mathcal{Q}_2\bm U ^n.
\end{equation}
here we denote
\begin{align}
    \mathcal{Q}_1 &= \left( \bm I - \frac{\Delta t}{2}\mathcal{A}^{-1}\mathcal{B} \right),\label{Q1}\\
    \mathcal{Q}_2 &= \left( \bm I + \frac{\Delta t}{2}\mathcal{A}^{-1}\mathcal{B} \right)\label{Q2},
\end{align}
and the amplification matrix $\mathcal{Q}$ is given by
\begin{equation}
    \mathcal{Q} = \mathcal{Q}_1^{-1} \mathcal{Q}_2.
    \label{Q}
\end{equation}
In order to obtain the stability of the scheme proposed in the previous section, we need to verify whether the spectral radius of the amplification matrix satisfies the von Neumann stability condition, that is, to verify
\begin{equation}
    \left|\rho(\mathcal{Q})\right| = \sup_{\mathcal{Q}\bm x=\lambda \bm x,\bm x\neq \bm 0} \left| \lambda \right| \leq 1.
    \label{eq:Spectral radius of Spectral radius leqslant 1}
\end{equation}
where $\rho(\mathcal{Q})$ represents the spectral radius of $\mathcal{Q}$. 

We now give a general statement and proof of the above discussion. First we will introduce some useful lemmas
\begin{lemma}
    All of the eigenvalues of a real symmetric and strictly diagonally dominant matrix with positive diagonal elements are positive real numbers.
\end{lemma}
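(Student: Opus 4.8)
The plan is to separate the two claims bundled in the statement---that the eigenvalues are \emph{real} and that they are \emph{positive}---and dispatch each with a classical tool. Reality is immediate from symmetry: by the spectral theorem, any real symmetric matrix is orthogonally diagonalizable and hence has only real eigenvalues, and this uses neither diagonal dominance nor positivity of the diagonal. So the remaining work is entirely about pinning down the sign.

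To fix the sign I would run a Gershgorin-type localization. Write $M=(m_{ij})$, let $\lambda$ be any eigenvalue with eigenvector $\bm x\neq\bm 0$, and choose an index $k$ with $|x_k|=\max_i|x_i|>0$. The $k$-th row of $M\bm x=\lambda\bm x$ reads $(\lambda-m_{kk})x_k=\sum_{j\neq k}m_{kj}x_j$; taking moduli and using $|x_j|\le|x_k|$ gives $|\lambda-m_{kk}|\le\sum_{j\neq k}|m_{kj}|$. Strict diagonal dominance, $\sum_{j\neq k}|m_{kj}|<m_{kk}$, then forces $|\lambda-m_{kk}|<m_{kk}$, so every eigenvalue lies in the open disc of radius $m_{kk}$ centered at the positive number $m_{kk}$.

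Combining the two observations finishes the argument: since $\lambda$ is real and $|\lambda-m_{kk}|<m_{kk}$, we have $0<\lambda<2m_{kk}$, and in particular $\lambda>0$; as $\lambda$ was an arbitrary eigenvalue, all eigenvalues are positive reals. I expect no genuine obstacle here, since the statement is classical; the only points requiring care are that one may divide by $|x_k|$ only after noting $|x_k|>0$ (guaranteed because $\bm x\neq\bm 0$), and that the \emph{strict} inequality must be carried through the entire chain so as to land on $\lambda>0$ rather than merely $\lambda\ge 0$. An alternative route would be to argue positive definiteness directly from $\bm x^{T}M\bm x>0$, but the Gershgorin localization is cleaner because it produces the reality and positivity in a single, self-contained estimate.
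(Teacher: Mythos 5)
Your proof is correct and follows essentially the same route as the paper: reality from symmetry, then positivity via the Gershgorin estimate $|\lambda - m_{kk}| \le \sum_{j\neq k}|m_{kj}| < m_{kk}$. The only difference is cosmetic --- you derive the Gershgorin bound from scratch via the maximal-component argument, whereas the paper cites the Gershgorin Circle Theorem directly.
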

\begin{proof}
    Let $\bm \Lambda$ be a real symmetric and strictly diagonally dominant matrix with $n$ rows and $n$ columns, and let $\eta$ be the eigenvalue of $\bm\Lambda$ with eigenvector $\bm x$ i.e. $\bm \Lambda \bm x = \eta \bm x$.\par
    For $n=1,2$ the result is trivial.\par
    Here we consider the situation with $n \geq 3$. First $\eta$ must be a real number since $\bm \Lambda$ is real symmetric. Using the Gershgorin Circle Theorem\cite{weisstein2003gershgorin}, we can deduce the estimate of $\eta$, and notice that $\bm \Lambda$ is strictly diagonally dominant i.e. 
    \begin{equation}
   \sum_{j=1,j\neq i}^{n}\left| \Lambda_{ij} \right| < \left| \Lambda_{ii}\right|  ,   
    \end{equation}
    so
    \begin{align}
        \label{eq_lemma1}
        \left| \Lambda_{ii} - \eta\right| \leq \sum_{j=1,j\neq i}^{n}\left| \Lambda_{ij} \right| < \left| \Lambda_{ii}\right|,
    \end{align}
here $\Lambda_{ij}$ denote the element of $i$th row and $j$th column. since $\Lambda_{ii} > 0$, \eqref{eq_lemma1} holds if only if $\eta > 0$ holds.
\end{proof}
\begin{lemma}
    The relation \eqref{eq:Spectral radius of Spectral radius leqslant 1} holds if and only if  $\mathfrak{Re}(z) \leq0$, where $z$ is the eigenvalue of  matrix $\mathcal{A}^{-1}\mathcal{B}$.
\begin{proof}
    First, we let $\lambda$, $\lambda_1$, $\lambda_2$, and $z$ be eigenvalues of $\mathcal{Q}$, $\mathcal{Q}_1$, $\mathcal{Q}_2$, and $\mathcal{A}^{-1}\mathcal{B}$, respectively. From the definition of $\mathcal{Q}$ implies
    \begin{align}
        \lambda = \frac{\lambda_2}{\lambda_1},
    \end{align}
since $\left(\hat{\bm A} \pm k\bm I\right)\bm x = \left(\hat{\lambda} \pm k\right)\bm x$, for any complex number $k$, and $\hat{\bm A}\bm{x} = \hat{\lambda}\bm{x}$. We deduce
    \begin{align}
        \lambda_1 &= 1-\frac{\Delta t}{2}z ,\\
        \lambda_2 &= 1+\frac{\Delta t}{2}z ,
    \end{align}
combine the results above implies
    \begin{equation}
        \lambda = \frac{1+\frac{\Delta t}{2}z}{1-\frac{\Delta t}{2}z} .
    \end{equation}
Let's set $z\Delta t = a + bi$ where  $a,b\in \mathbb{R}$. A simple computation yields
\begin{equation}
    \left | \lambda_1 \right | ^2 = (1 + \frac{a}{2} )^2 + \frac{b^2}{4} ,
\end{equation}
and
\begin{equation}
    \left | \lambda_2 \right | ^2 = (1 - \frac{a}{2} )^2 + \frac{b^2}{4} .
\end{equation}
From these calculations, we observe that $\left | \lambda_1 \right | \leq \left | \lambda_2 \right |$ if and only if $a \leq 0$and $\left | \lambda_1 \right | > \left | \lambda_2 \right |$ if and only if $a > 0$. Therefore, the relation \eqref{eq:Spectral radius of Spectral radius leqslant 1} holds if and only if $z$ be in the left-half complex plane, that is $\mathfrak{Re}(z) \leq0$.

\end{proof}
\end{lemma}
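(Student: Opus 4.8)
The plan is to exploit the fact that $\mathcal{Q}_1$, $\mathcal{Q}_2$, and hence $\mathcal{Q}$, are all rational functions of the single matrix $M := \mathcal{A}^{-1}\mathcal{B}$, so that the eigenstructure of $\mathcal{Q}$ is completely determined by that of $M$ through a scalar Möbius (Cayley) transform. First I would observe from \eqref{Q1}--\eqref{Q2} that if $\bm x$ is an eigenvector of $M$ with $M\bm x = z\bm x$, then $\mathcal{Q}_1 \bm x = (1 - \tfrac{\Delta t}{2}z)\bm x$ and $\mathcal{Q}_2 \bm x = (1 + \tfrac{\Delta t}{2}z)\bm x$; since $\mathcal{Q}_1$ and $\mathcal{Q}_2$ commute (both being polynomials in $M$), the same $\bm x$ is an eigenvector of $\mathcal{Q} = \mathcal{Q}_1^{-1}\mathcal{Q}_2$ with eigenvalue
\[
\lambda = \frac{1 + \tfrac{\Delta t}{2}z}{1 - \tfrac{\Delta t}{2}z}.
\]
By the spectral mapping theorem this accounts for every eigenvalue of $\mathcal{Q}$, so that $\rho(\mathcal{Q})$ equals the maximum of $|\lambda|$ taken over the eigenvalues $z$ of $M$.

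The heart of the argument is then a purely scalar estimate. Writing $z\Delta t = a + b\mathrm{i}$ with $a,b \in \mathbb{R}$, I would compute
\[
\bigl|1 - \tfrac{\Delta t}{2}z\bigr|^2 = \bigl(1 - \tfrac{a}{2}\bigr)^2 + \tfrac{b^2}{4}, \qquad \bigl|1 + \tfrac{\Delta t}{2}z\bigr|^2 = \bigl(1 + \tfrac{a}{2}\bigr)^2 + \tfrac{b^2}{4},
\]
whose difference is exactly $2a$. Hence $|\lambda| \le 1$ if and only if $|1 + \tfrac{\Delta t}{2}z| \le |1 - \tfrac{\Delta t}{2}z|$, i.e. if and only if $a \le 0$; since $\Delta t > 0$ this is precisely $\mathfrak{Re}(z) \le 0$. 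This is the familiar statement that the Cayley transform carries the closed left half-plane into the closed unit disk, and its complement into the exterior of that disk.

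To finish, I would combine the two observations: because $\rho(\mathcal{Q}) = \max_z |\lambda(z)|$ and the map $z \mapsto \lambda(z)$ sends $\{\mathfrak{Re}(z) \le 0\}$ into $\{|\lambda| \le 1\}$ while sending $\{\mathfrak{Re}(z) > 0\}$ into $\{|\lambda| > 1\}$, the condition \eqref{eq:Spectral radius of Spectral radius leqslant 1} holds if and only if every eigenvalue $z$ of $M$ lies in the closed left half-plane. Both implications follow at once: if all eigenvalues satisfy $\mathfrak{Re}(z)\le 0$ then $\rho(\mathcal{Q})\le 1$, and if some eigenvalue has positive real part then it produces a $\lambda$ with $|\lambda|>1$, forcing $\rho(\mathcal{Q})>1$.

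The main obstacle I anticipate is not the scalar computation but the justification that the eigenvalues of $\mathcal{Q}$ are exactly the images $\lambda(z)$ of the eigenvalues of $M$. This requires that $\mathcal{Q}_1 = \bm I - \tfrac{\Delta t}{2}M$ be invertible, i.e. that $1 - \tfrac{\Delta t}{2}z \ne 0$ for every eigenvalue $z$; for the ``if'' direction this is automatic since $\mathfrak{Re}(z) \le 0 < 2/\Delta t$, but for a fully general statement one should note that the transform is well defined precisely when no eigenvalue equals $2/\Delta t$. If $M$ fails to be diagonalizable, I would rely on the spectral mapping theorem for the rational function $r(t) = (1 + \tfrac{\Delta t}{2}t)/(1 - \tfrac{\Delta t}{2}t)$ rather than on simultaneous diagonalization, so that the eigenvalue correspondence $\lambda = r(z)$, and therefore the evaluation of $\rho(\mathcal{Q})$, remains valid regardless of the Jordan structure of $M$.
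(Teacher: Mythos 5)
Your proof is correct and follows essentially the same route as the paper: write $\mathcal{Q}_1,\mathcal{Q}_2$ as $\bm I \mp \tfrac{\Delta t}{2}\mathcal{A}^{-1}\mathcal{B}$, relate the spectra through the Cayley transform $\lambda = \bigl(1+\tfrac{\Delta t}{2}z\bigr)/\bigl(1-\tfrac{\Delta t}{2}z\bigr)$, and conclude via the scalar computation showing this map carries the closed left half-plane onto the closed unit disk. Your version is in fact tighter than the paper's: the paper simply asserts $\lambda = \lambda_2/\lambda_1$ without the common-eigenvector/spectral-mapping justification you supply, and it does not address invertibility of $\mathcal{Q}_1$ or the non-diagonalizable case, both of which you handle explicitly.
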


    Now we only need to prove that all the eigenvalues of $\mathcal{A}^{-1}\mathcal{B}$ have a negative real part. Before proof it, we should proof a lemma.
\begin{lemma}The matrices $ A = diag_n\left(b,a,b\right)$ and $B = diag_n\left(d,c,d\right)$ are commutative.
\label{le:commutative of three diag matricx}
    \begin{proof}
        We proceed by induction on the size of the matrices $A$ and $B$.
    
        First, for $n=1$, the conclusion is trivial.
    
        For $n=2$, through direct computation, we have
        \begin{equation}
            AB = \begin{pmatrix} a & b \\ b & a \end{pmatrix}
            \begin{pmatrix} c & d \\ d & c \end{pmatrix}
            = \begin{pmatrix} ac + bd & ad + bc \\ bc + ad & bd + ac \end{pmatrix} 
            = \begin{pmatrix} c & d \\ d & c \end{pmatrix}
            \begin{pmatrix} a & b \\ b & a \end{pmatrix}
            = BA.
        \end{equation}
    
        Assume that the conclusion holds for $n-1$, for the case of $n$,
        \begin{equation}
            A = \begin{pmatrix} A_1 & \alpha \\ \alpha^T & a \end{pmatrix}, 
            B = \begin{pmatrix} B_1 & \beta \\ \beta^T & c \end{pmatrix},
        \end{equation}
        where $\alpha = (0, \dots, b)^T \in \mathbb{C}^{n-1}$, $\beta = (0, \dots, d)^T \in \mathbb{C}^{n-1}$, $A_1 = \operatorname{diag}\{b, a, b\}_{n-1}$, and $B_1 = \operatorname{diag}\{d, c, d\}_{n-1}$. Computing the products, we obtain
        \begin{equation}
            AB = \begin{pmatrix} A_1 & \alpha \\ \alpha^T & a \end{pmatrix}
            \begin{pmatrix} B_1 & \beta \\ \beta^T & c \end{pmatrix}
            = \begin{pmatrix} A_1B_1 + \alpha\beta^T & A_1\beta + c\alpha \\ \alpha^TB_1 + a\alpha^T & \alpha^T\beta + ac \end{pmatrix},
        \end{equation}
        and
        \begin{equation}
            BA = \begin{pmatrix} B_1 & \beta \\ \beta^T & c \end{pmatrix}
            \begin{pmatrix} A_1 & \alpha \\ \alpha^T & a \end{pmatrix}
            = \begin{pmatrix} B_1A_1 + \beta\alpha^T & B_1\alpha + a\beta \\ \beta^TA_1 + c\alpha^T & \beta^T\alpha + ca \end{pmatrix}.
        \end{equation}
    
        Since both $\alpha$ and $\beta$ are vectors, it follows that $\alpha\beta^T = \beta\alpha^T$ and $\alpha^T\beta = \beta^T\alpha$. By the induction hypothesis, we know that $A_1B_1 = B_1A_1$. Thus, we have
        \begin{align}
            A_1B_1 + \alpha\beta^T &= B_1A_1 + \beta\alpha^T, \\
            \alpha^T\beta + ac &= \beta^T\alpha + ca.
        \end{align}
    
        Moreover, direct computation yields
        \begin{align}
            A_1\beta + c\alpha &= B_1\alpha + a\beta = (0, \dots, bd, ad + bc)^T, \\
            \alpha^TB_1 + a\alpha^T &= \beta^TA_1 + c\alpha^T = (0, \dots, bd, ad + bc).
        \end{align}
    
        Therefore, it follows that $AB = BA$. By the principle of induction, the lemma is proved.
    \end{proof}
    
\end{lemma}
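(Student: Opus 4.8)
The plan is to sidestep induction by recognizing that both matrices are affine functions of a \emph{single, shared} tridiagonal skeleton. I would introduce the symmetric tridiagonal Toeplitz matrix
\[
K := \mathrm{diag}_n(1,0,1),
\]
with zeros on the main diagonal and ones on both off-diagonals. Because $A$ and $B$ have exactly the same off-diagonal pattern (both are symmetric, with a constant main diagonal and equal, constant sub- and super-diagonal entries), a direct comparison of entries gives
\[
A = a\,\bm I + b\,K, \qquad B = c\,\bm I + d\,K .
\]
This is the only step that invokes the specific structure of the matrices, and it is precisely where the hypothesis that the two off-diagonals coincide gets used.

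Once this decomposition is in hand, commutativity is immediate: any two polynomials in the same matrix commute, since powers of $K$ commute among themselves and with $\bm I$. Expanding in either order,
\[
AB = ac\,\bm I + (ad+bc)\,K + bd\,K^{2} = BA,
\]
the two products agreeing monomial by monomial because each of $\bm I$, $K$, $K^{2}$ carries the same scalar coefficient regardless of the multiplication order. In particular the result is uniform in $n$ and requires no base cases.

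I expect no genuine obstacle here; the statement is essentially a triviality once the common-skeleton decomposition is spotted. The only point deserving care is the entrywise verification $A = a\bm I + bK$ and $B = c\bm I + dK$, which follows directly from the paper's own definition of $\mathrm{diag}_n(\cdot,\cdot,\cdot)$. If one instead wishes to keep the argument fully elementary and avoid naming the skeleton matrix, the natural fallback is block-induction on the size, where the only mildly delicate step is checking that the rank-one corrections arising in the bottom-right block, $\alpha\beta^{T}$ and $\beta\alpha^{T}$, coincide; but the polynomial viewpoint makes that verification unnecessary, so I would present the $K$-decomposition as the proof.
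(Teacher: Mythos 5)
Your proof is correct, but it takes a genuinely different route from the paper. The paper proceeds by induction on the matrix size: it writes $A$ and $B$ in block form with an $(n-1)\times(n-1)$ leading block plus a bordering row and column, verifies the $n=1,2$ base cases, and checks that the four blocks of $AB$ and $BA$ agree, the key points being the rank-one identity $\alpha\beta^{T}=\beta\alpha^{T}$ (which holds here because $\alpha$ and $\beta$ are parallel, both supported on the last coordinate) and an explicit computation of the border products. Your argument instead factors both matrices through the common skeleton $K=\mathrm{diag}_n(1,0,1)$, giving $A=a\bm I+bK$ and $B=c\bm I+dK$, after which commutativity is automatic because polynomials in a single matrix commute:
\begin{equation}
AB=(a\bm I+bK)(c\bm I+dK)=ac\,\bm I+(ad+bc)K+bd\,K^{2}=(c\bm I+dK)(a\bm I+bK)=BA.
\end{equation}
This is shorter, uniform in $n$ with no base cases, and it isolates exactly where the hypothesis is used (both matrices share the same off-diagonal pattern); it also generalizes immediately, e.g.\ to any two matrices that are polynomials in a common $K$, whereas the paper's induction is tied to the specific tridiagonal structure. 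The one step you should spell out when writing this up is the entrywise check that $\mathrm{diag}_n(b,a,b)=a\bm I+bK$, which is immediate from the paper's definition of $\mathrm{diag}_n(\cdot,\cdot,\cdot)$; with that stated, your proof is complete and, arguably, preferable to the paper's.
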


\begin{remark}
    \label{remark:the commmutative of A^{-1} and B}
    From the above lemma, we know that the matrices $\bm A$ and $\bm B$, as defined in the finite difference method, are commutative, i.e., $\bm A \bm B = \bm B \bm A$. Since $\bm A$ is invertible, we multiply both sides of the equation $\bm A \bm B = \bm B \bm A$ on the left by $\bm A^{-1}$ and on the right by $\bm A^{-1}$, yielding $\bm A^{-1} \bm B = \bm B \bm A^{-1}$.
\end{remark}

\begin{lemma}
If $EI, \rho $ and $c$ are all positive numbers, then all the eigenvalues of $\mathcal{A}^{-1}\mathcal{B}$ have negative real part.
\end{lemma}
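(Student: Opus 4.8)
The plan is to exploit the block structure of $\mathcal{A}^{-1}\mathcal{B}$ and reduce the eigenvalue problem to a family of decoupled $2\times 2$ problems. First I would compute $\mathcal{A}^{-1}\mathcal{B}$ explicitly. Since $\mathcal{A} = \mathrm{diag}(\rho\bm A, \bm A)$ is block diagonal with invertible blocks, $\mathcal{A}^{-1} = \mathrm{diag}\left(\tfrac{1}{\rho}\bm A^{-1}, \bm A^{-1}\right)$, and a direct multiplication with $\mathcal{B}$ gives
\[
\mathcal{A}^{-1}\mathcal{B} = \begin{pmatrix} -\frac{c}{\rho}\bm I & \frac{EI}{\rho}\bm A^{-1}\bm B \\ -\bm A^{-1}\bm B & \mathbf{0} \end{pmatrix}.
\]
Writing $\bm M := \bm A^{-1}\bm B$, the entire analysis hinges on the spectral properties of $\bm M$.

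Next I would establish that $\bm M$ is symmetric with nonzero real eigenvalues. By Remark \ref{remark:the commmutative of A^{-1} and B}, $\bm A^{-1}$ and $\bm B$ commute, and since both are symmetric, $\bm M^T = \bm B^T(\bm A^{-1})^T = \bm B\bm A^{-1} = \bm A^{-1}\bm B = \bm M$; hence $\bm M$ is real symmetric and all of its eigenvalues $\mu_k$ are real. Moreover $\bm B$ is invertible (its eigenvalues $\frac{4}{\Delta x^2}\sin^2\frac{k\pi}{2N_x}$ are strictly positive), so $\bm M$ is invertible and every $\mu_k \neq 0$, i.e. $\mu_k^2 > 0$. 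I would then take an orthogonal matrix $\bm P$ with $\bm P^T\bm M\bm P = \bm D = \mathrm{diag}(\mu_1, \ldots, \mu_{N_x-1})$.

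The reduction step conjugates by $\mathcal{P} := \mathrm{diag}(\bm P, \bm P)$, which is orthogonal, yielding
\[
\mathcal{P}^T(\mathcal{A}^{-1}\mathcal{B})\mathcal{P} = \begin{pmatrix} -\frac{c}{\rho}\bm I & \frac{EI}{\rho}\bm D \\ -\bm D & \mathbf{0} \end{pmatrix},
\]
so that both off-diagonal blocks become diagonal simultaneously; this is exactly where the commutativity and symmetry of $\bm M$ are essential. Permuting indices to pair the $k$-th coordinate of the first block with the $k$-th of the second decouples this matrix into $2\times 2$ blocks $\begin{pmatrix} -c/\rho & EI\mu_k/\rho \\ -\mu_k & 0 \end{pmatrix}$, whose characteristic polynomials are $z^2 + \frac{c}{\rho}z + \frac{EI}{\rho}\mu_k^2$. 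The spectrum of $\mathcal{A}^{-1}\mathcal{B}$ is then the union over $k$ of the roots of these quadratics.

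Finally, for each $k$ both coefficients $\frac{c}{\rho}$ and $\frac{EI}{\rho}\mu_k^2$ are strictly positive (using $c, EI, \rho > 0$ and $\mu_k^2 > 0$), so by the Routh--Hurwitz criterion for quadratics — equivalently, the two roots sum to $-c/\rho < 0$ with product $\frac{EI}{\rho}\mu_k^2 > 0$, forcing negative real parts whether the roots are real or a complex-conjugate pair — both roots satisfy $\mathfrak{Re}(z) < 0$. I expect the main obstacle to be the reduction step: justifying cleanly that the orthogonal conjugation turns both appearances of $\bm M$ into the same diagonal $\bm D$ and that the resulting matrix decouples into the stated $2\times 2$ blocks. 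Once that is set up, the remaining sign analysis is immediate.
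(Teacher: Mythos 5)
Your proof is correct, and it takes a genuinely different route from the paper's. The paper works directly with an arbitrary eigenpair: writing $\mathcal{C}=\mathcal{A}^{-1}\mathcal{B}$ and $\zeta=(\zeta_1,\zeta_2)^T$, it extracts the relation $\bm A^{-1}\bm B\,\zeta_2=\frac{\rho\eta+c}{EI}\zeta_1$ from the eigenvalue equation, evaluates the quadratic form $\zeta^*(\mathcal{C}+\mathcal{C}^*)\zeta$ in two ways (using exactly the same structural facts you invoke: symmetry of $\bm A^{-1}$ and $\bm B$ plus the commutativity of remark \ref{remark:the commmutative of A^{-1} and B}), and lands on the scalar identity
\begin{equation*}
-\left(\frac{\rho}{EI}\zeta_1^*\zeta_1+\zeta_2^*\zeta_2\right)\mathfrak{Re}(\eta)=\frac{c}{EI}\zeta_1^*\zeta_1,
\end{equation*}
from which it concludes $\mathfrak{Re}(\eta)\le 0$. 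You instead orthogonally diagonalize $\bm M=\bm A^{-1}\bm B$ and decouple the $2(N_x-1)$-dimensional eigenvalue problem into $N_x-1$ independent quadratics $z^2+\frac{c}{\rho}z+\frac{EI}{\rho}\mu_k^2$, finishing with Routh--Hurwitz. Your route buys more: it exhibits the whole spectrum explicitly, and it delivers the strict inequality $\mathfrak{Re}(z)<0$, which is what the lemma actually asserts --- the paper's own proof only reaches $\mathfrak{Re}(\eta)\le 0$, and strictness would require the extra observation (implicit in your use of the invertibility of $\bm B$) that $\zeta_1\neq 0$ for any eigenvector. What the paper's route buys in exchange is economy of hypotheses: it never needs the eigenvalues or positive definiteness of $\bm B$, and the non-strict bound is all that the von Neumann condition \eqref{eq:Spectral radius of Spectral radius leqslant 1} requires. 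The step you flagged as the main obstacle --- that conjugation by $\mathrm{diag}(\bm P,\bm P)$ sends both off-diagonal blocks to the same diagonal matrix $\bm D$, after which an interleaving permutation block-diagonalizes the matrix into the stated $2\times 2$ blocks --- is sound precisely because both off-diagonal blocks are scalar multiples of the single matrix $\bm M$, so one orthogonal matrix $\bm P$ diagonalizes them simultaneously; nothing beyond what you wrote is needed.
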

\begin{proof}
        Let $\eta = x + iy$ be an eigenvalue of the matrix $\mathcal{C} := \mathcal{A}^{-1}\mathcal{B}$ and $\zeta = \begin{pmatrix}
            \zeta_1\\ \zeta_2
        \end{pmatrix}$ be the corresponding non-zero eigenvector, where $\zeta \in \mathbb{C}^{2(N_x-1)},\zeta_1,\zeta_2\in \mathbb{C}^{N_x-1}$, i.e.
			\begin{equation}
             \mathcal{C}\zeta = 
				\mathcal{A}^{-1}\mathcal{B}\zeta = 
                \begin{pmatrix}
			      \frac{1}{\rho}\bm A^{-1}& \bm 0  \\  \bm 0& \bm A^{-1}    \end{pmatrix} 
			    \begin{pmatrix}
			      -c\bm A& EI \bm B  \\  -\bm B & \bm 0   \end{pmatrix} 
			    \mathbb{\zeta} = 
                \begin{pmatrix}
			      -\frac{c}{\rho}\bm I& \frac{EI}{\rho}\bm A^{-1} \bm B \\  -\bm A^{-1} \bm B & \mathbf{0}    \end{pmatrix} 
			    \mathbb{\zeta} =    
			    \eta \mathbb{\zeta}.
			    \label{eq:proof_C_long}
			\end{equation}
So we have two equation of  $\zeta_1$ and $\zeta_2$
\begin{equation*}
		    \left\{
		    \begin{aligned}
		    -\frac{c}{\rho}\zeta_1 + \frac{EI}{\rho}\bm{A}^{-1}\bm{B}\zeta_2 &= \eta \zeta_1,\\
		    -\bm{A}^{-1}\bm{B}\zeta_1  & = \eta \zeta_2.\\
		    \end{aligned}
		    \right.
		\end{equation*}
Then the relation between $\zeta_1$ and $\zeta_2$ is 
\begin{equation}
 \bm{A^{-1}B}\zeta_2 = \frac{\rho \eta + c}{EI}\zeta_1.
 \label{eq:the relation of zeta1 and zeta2}
\end{equation}
        Take the conjugate transpose on both sides of the equation \eqref{eq:proof_C_long} gives $\zeta^*\mathcal{C}^*  = \overline{\eta}\zeta^*$. Right multiply vector $\zeta$, we have 
			\begin{equation}
			    \zeta^*\mathcal{C}^*\zeta  = \overline{\eta}\zeta^*\zeta.
			    \label{eq:proof_B_1}
			\end{equation}
		Left multiply equation (\ref{eq:proof_C_long}) with vector $\zeta^*$, we have
			\begin{equation}
				\zeta^*\mathcal{C}\zeta  = 
				\eta\zeta^*\zeta.
				\label{eq:proof_B_2}
			\end{equation}		
	Note that $\bm A^{-1} ,\bm B$ are both symmetric matrices.We also know $\bm A^{-1} \bm B = \bm B \bm A^{-1}$from remark \ref{remark:the commmutative of A^{-1} and B}, so we have
		\begin{equation}
		    \mathcal{C}^* = 
		     \begin{pmatrix} -\frac{c}{\rho}\bm I
             & -\bm A^{-1} \bm B  \\  
		     \frac{EI}{\rho}\bm A^{-1} \bm B  & \mathbf{0}.		     			 
		     \end{pmatrix} 
		\end{equation}
		So add equation (\ref{eq:proof_B_1}) and (\ref{eq:proof_B_2}) side by side,  and  use  the relation \eqref{eq:the relation of zeta1 and zeta2} , we can perform the following calculation
		\begin{align*}
	    \zeta^*(\bm C^* + \bm C)\zeta  &=
	    \begin{pmatrix}
	        \zeta_1^* & \zeta_2^*
	    \end{pmatrix}
	    \begin{pmatrix}
	        -\frac{2c}{\rho}\bm I & (\frac{EI}{\rho} - 1)\bm A^{-1} \bm B  \\ 
	        (\frac{EI}{\rho} - 1)\bm A^{-1} \bm B & \bm 0
	    \end{pmatrix}
	    \begin{pmatrix}	        \zeta_1 \\	        \zeta_2	    \end{pmatrix} \\ 
	    &= -\frac{2c}{\rho}\zeta_1^*\zeta_1 
	    + \left(\frac{EI}{\rho} - 1\right)\zeta_1^*\bm A^{-1} \bm B\zeta_2  
	    + \left(\frac{EI}{\rho} - 1\right)\zeta_2^*\bm A^{-1} \bm B\zeta_1\\
	    &= -\frac{2c}{\rho}\zeta_1^* \zeta_1 
	    + 2\mathfrak{Re} \left[\left(\frac{EI}{\rho} - 1\right)\zeta_1^*\bm A^{-1} \bm B\zeta_2\right]\\
	    &= -\frac{2c}{\rho}\zeta_1^*\zeta_1 
	    + 2\left(\frac{EI}{\rho} - 1\right)\mathfrak{Re} \left[\zeta_1^*\bm A^{-1} \bm B\zeta_2\right]\\
	    &= -\frac{2}{\rho} \left\{c\zeta_1^*\zeta_1 
	    + (\rho - EI)\mathfrak{Re} \left[\zeta_1^*\bm A^{-1} \bm B\zeta_2\right]\right\}\\
	    &= -\frac{2}{\rho} \left[c +\left (\frac{\rho}{EI} - 1\right)\mathfrak{Re}(\rho\eta+c)\right]\zeta_1^*\zeta_1\\
	    &= -2\left[\left(\frac{\rho}{EI} - 1\right)\mathfrak{Re}(\eta) + \frac{c}{EI}\right]\zeta_1^*\zeta_1\\
        &= (\eta + \overline{\eta})\zeta^*\zeta \\
	    &= 2\mathfrak{Re}(\eta)\zeta^*\zeta ,
	\end{align*}
	Taking the last formula and the third to last formula of calculation above, we have
    \begin{equation*}
        \left[\left(1 - \frac{\rho}{EI}\right)\mathfrak{Re}(\eta) - \frac{c}{EI}\right]\zeta_1^*\zeta_1 = \mathfrak{Re}(\eta)\zeta^*\zeta ,
    \end{equation*}

    Note that  $\zeta^*\zeta = \zeta_1^*\zeta_1 + \zeta_2^*\zeta_2$, then  we have
    \begin{equation*}
        -\left(\frac{\rho}{EI}\zeta^*_1\zeta_1 + \zeta^*_2\zeta_2\right) \mathfrak{Re}(\eta) =         \frac{c}{EI}\zeta_1^*\zeta_1.
    \end{equation*}
     It's easy to note that $\zeta_1^*\zeta_1 \ge 0$ and $\zeta^*_2\zeta_2 \ge 0$. And by assumption, so we must have $\mathfrak{Re}(\eta) \le 0$.
\end{proof}

\begin{remark}
    The conditions in the theorem are that $EI$,$\rho$,$c$ are all positive numbers, and in the actual physical background, these values are all positive numbers. Therefore, we have proved the stability of this method in general engineering calculations.
\end{remark}

\begin{remark}
    Note that if only the real part of the eigenvalue of $\mathcal{B}$ is less than or equal to 0, it cannot be obtained that the real part of the eigenvalue of $\mathcal{A}^{-1}\mathcal{B}$ is less than or equal to 0, where $\mathcal{A}$ is defined by \eqref{eq:the definition of matrix A and B}. For example, let $EI = \rho = c = 1$, and $\mathcal{A},\mathcal{B} $ are both  fourth-order matrices, where 
    \begin{equation*}
        \mathcal{B} = \begin{bmatrix}
        -3 & -4 & 3 & -2 \\
        5 & 0 & 0 & 2 \\
        4 & -1 & -1 & 5 \\
        -2 & 5 & -4 & -2
        \end{bmatrix}
    \end{equation*}
then we can compute that the eigenvalues of  $\mathcal{B}$  and $\mathcal{A}^{-1}\mathcal{B}$ are
\begin{table}[h!]
\centering
\begin{tabular}{|c|c|}
\hline
\textbf{Eigenvalues of $\mathcal{B}$}& \textbf{Eigenvalues of $\mathcal{A}^{-1}\mathcal{B}$}\\ \hline
$-4.2503 + 0.0000i$ & $-6.4027 + 0.0000i$ \\ \hline
$-1.3154 + 0.0000i$ & $-1.2627 + 0.0000i$ \\ \hline
$-0.2172 + 4.1164i$ & $0.0751 + 4.9855i$ \\ \hline
$-0.2172 - 4.1164i$ & $0.0751 - 4.9855i$ \\ \hline
\end{tabular}
\caption{Eigenvalues of $\mathcal{B}$ and $\mathcal{A}^{-1}\mathcal{B}$}
\label{tab:eigenvalues}
\end{table}
It can be found that the real part of the eigenvalues of $\mathcal{B}$ is less than or equal to 0, but $\mathcal{A}^{-1}\mathcal{B}$ is not in table (\ref{tab:eigenvalues}). Therefore, the proofs of stability analysis in many literature are completely wrong, such as \cite{aouragh2024compact}.    

\end{remark}
 According to the above discussion, we get the following theorem
 
\begin{theorem}[Stability]
 If  $E,I,\rho,c$ are all positive constants, then the scheme \eqref{The final Scheme} is unconditionally stable.   
 \end{theorem}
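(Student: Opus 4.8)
The plan is to assemble the results already established, since the genuine analytical work has been carried out in the preceding lemmas. The key observation is that unconditional stability reduces, via the von Neumann criterion \eqref{eq:Spectral radius of Spectral radius leqslant 1}, to a spectral condition on $\mathcal{A}^{-1}\mathcal{B}$ that is entirely independent of the step sizes $\Delta t$ and $\Delta x$.

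First I would note that the hypotheses $E, I, \rho, c > 0$ immediately yield $EI > 0$, $\rho > 0$, and $c > 0$, which are exactly the hypotheses of the lemma asserting that every eigenvalue $z$ of $\mathcal{A}^{-1}\mathcal{B}$ satisfies $\mathfrak{Re}(z) \le 0$. Invoking that lemma, all eigenvalues of $\mathcal{A}^{-1}\mathcal{B}$ lie in the closed left half-plane. Next I would apply the lemma characterizing the von Neumann condition, which states that \eqref{eq:Spectral radius of Spectral radius leqslant 1} holds if and only if $\mathfrak{Re}(z) \le 0$ for every eigenvalue $z$ of $\mathcal{A}^{-1}\mathcal{B}$. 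Chaining the two lemmas gives $|\rho(\mathcal{Q})| \le 1$ for the amplification matrix $\mathcal{Q}$ defined in \eqref{Q}.

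The final and decisive step is to emphasize that neither lemma imposed any restriction on $\Delta t$ or $\Delta x$: the spectral inclusion depended only on the signs of $EI, \rho, c$ and on the structure of $\bm A, \bm B$, while the equivalence in the von Neumann lemma was derived for a generic eigenvalue with $z\Delta t = a + bi$. Hence the bound $|\rho(\mathcal{Q})| \le 1$ holds for \emph{all} admissible step sizes, which is precisely the statement of unconditional stability for \eqref{The final Scheme}.

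The main obstacle is not computational but one of rigor and bookkeeping. One should confirm that the von Neumann equivalence genuinely applies uniformly as the step sizes vary, and should reconcile the fact that the eigenvalue lemma is stated with ``negative real part'' while its proof actually concludes $\mathfrak{Re}(z) \le 0$; the weaker conclusion $\mathfrak{Re}(z) \le 0$ is exactly what the criterion requires, so the theorem follows. A more delicate point, should one demand genuine power-boundedness of $\mathcal{Q}^n$ rather than the spectral-radius criterion, would be the possible non-normality of $\mathcal{Q}$ together with eigenvalues on the unit circle; within the von Neumann framework adopted here, however, the spectral-radius bound is taken as the definition of stability, so no further work is needed.
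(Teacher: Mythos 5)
Your proposal is correct and follows exactly the paper's own route: the paper's ``proof'' of this theorem is simply the chaining of the two preceding lemmas (the von Neumann equivalence for $\mathcal{Q}$ and the spectral bound $\mathfrak{Re}(\eta)\le 0$ for $\mathcal{A}^{-1}\mathcal{B}$ under $EI,\rho,c>0$), with unconditional stability following because neither lemma constrains $\Delta t$ or $\Delta x$. Your added remarks --- that the eigenvalue lemma's statement says ``negative real part'' while its proof only yields $\mathfrak{Re}(\eta)\le 0$ (which suffices), and that the spectral-radius criterion does not by itself give power-boundedness of a non-normal $\mathcal{Q}$ --- are accurate observations about limitations the paper itself glosses over.
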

 
\subsection{Convergence Analysis}
Now we have proven the stability of the scheme. The convergence of the numerical scheme  can be concluded using the Lax Richtmyer theorem: stability and consistency imply convergence\cite{lax2005survey}. So now we have the convergence theorem below:

\begin{theorem}[Convergence]
    Let \( u(x,t)\in \mathcal{U} \) be the analytical solution of the Euler-Bernoulli beam equation, 
and assume \( u_{xxxx} \) and \( u_{tt} \) are bounded in the domain. The compact difference scheme is convergence with infinity-norm, further, 
\begin{equation}
    \| \mathbf u ^n-\mathbf v^n\|_{l^{\infty}}=O(\Delta x^4)+O(\Delta t^2), n\in \mathbf T,
\end{equation}
where $\mathbf u^n$ and $\mathbf{ v}^n$ represent the solution vector of  numerical solution and exact solution at $n$-th time step, respectively.
\end{theorem}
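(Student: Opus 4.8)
The plan is to turn the slogan preceding the theorem, \emph{consistency plus stability imply convergence}, into a concrete discrete Duhamel estimate for the one-step operator $\mathcal{Q}$ of \eqref{Q}. First I would restrict the exact solution to the grid: let $\bm{V}^n$ collect the nodal values of the exact pair $(\phi,\psi)$ at time level $n$, and let $\bm{U}^n$ be the computed vector of \eqref{cn_scheme}. Substituting $\bm{V}^n$ into the one-step form leaves, after writing it as in \eqref{cn_scheme}, a residual equal to $\Delta t\,\bm{\tau}^n$, where $\bm{\tau}^n=\mathcal{A}^{-1}(\mathcal{L}^n_jH^n_j-G^n_j)$ and Lemma~\ref{le:consistency of middle shceme} gives $\|\bm{\tau}^n\|=O(\Delta x^4)+O(\Delta t^2)$ uniformly in $n$; the computed vector leaves zero residual by definition. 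Subtracting and setting $\bm{E}^n:=\bm{V}^n-\bm{U}^n$ gives the error recursion $\mathcal{Q}_1\bm{E}^{n+1}=\mathcal{Q}_2\bm{E}^n+\Delta t\,\mathcal{A}^{-1}(\cdots)$, i.e. $\bm{E}^{n+1}=\mathcal{Q}\bm{E}^n+\Delta t\,\mathcal{Q}_1^{-1}\bm{\tau}^n$. With exact initial data $\bm{E}^0=\mathbf{0}$, so unrolling and taking norms,
\[
\|\bm{E}^n\|\le \Delta t\sum_{k=0}^{n-1}\big\|\mathcal{Q}^{\,n-1-k}\big\|\,\big\|\mathcal{Q}_1^{-1}\big\|\,\|\bm{\tau}^k\|\le T\Big(\sup_{0\le m\le N_T}\big\|\mathcal{Q}^m\big\|\Big)\big\|\mathcal{Q}_1^{-1}\big\|\max_{k}\|\bm{\tau}^k\|,
\]
using $n\Delta t\le T$. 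Thus convergence at the consistency rate reduces to two \emph{mesh-uniform} bounds, on $\|\mathcal{Q}_1^{-1}\|$ and on the powers $\sup_m\|\mathcal{Q}^m\|$.

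The delicate point, and the step I expect to be the main obstacle, is that these bounds cannot be read off from the Stability Theorem as stated, which only gives the spectral radius estimate $\rho(\mathcal{Q})\le1$. For a \emph{non-normal} family of matrices (and $\mathcal{Q}$ is built from the non-symmetric block $\mathcal{A}^{-1}\mathcal{B}$) a spectral-radius bound does not preclude large transient growth of $\|\mathcal{Q}^m\|$, so $\rho(\mathcal{Q})\le1$ alone is insufficient. I would close the gap with an energy estimate. Introduce the discrete beam-energy weight
\[
\mathcal{G}:=\begin{pmatrix}\rho\bm A & \mathbf{0}\\ \mathbf{0} & EI\bm A\end{pmatrix},\qquad \|\bm v\|_{\mathcal{G}}^2:=\Delta x\,\bm v^{*}\mathcal{G}\bm v,
\]
which is symmetric positive definite with condition number bounded independently of the mesh, since the eigenvalues of $\bm A$ lie in $[2/3,1]$. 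Using the symmetry of $\bm A,\bm B$ and the commutativity $\bm A^{-1}\bm B=\bm B\bm A^{-1}$ of Remark~\ref{remark:the commmutative of A^{-1} and B}, a direct computation shows
\[
\mathcal{G}\,\mathcal{A}^{-1}\mathcal{B}+(\mathcal{A}^{-1}\mathcal{B})^{*}\mathcal{G}=\begin{pmatrix}-2c\,\bm A & \mathbf{0}\\ \mathbf{0} & \mathbf{0}\end{pmatrix}\preceq \mathbf{0},
\]
i.e. $\mathcal{A}^{-1}\mathcal{B}$ is dissipative in the $\mathcal{G}$-inner product.

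Since Crank--Nicolson is the Cayley transform of $\mathcal{A}^{-1}\mathcal{B}$, the contraction identity
\[
\|\bm{v}\|_{\mathcal{G}}^2-\|\mathcal{Q}\bm{v}\|_{\mathcal{G}}^2=-2\Delta t\,\mathfrak{Re}\langle \mathcal{A}^{-1}\mathcal{B}\,\bm{w},\,\bm{w}\rangle_{\mathcal{G}}\ge 0,\qquad \bm{v}=\Big(\bm I-\tfrac{\Delta t}{2}\mathcal{A}^{-1}\mathcal{B}\Big)\bm{w},
\]
yields $\|\mathcal{Q}\|_{\mathcal{G}}\le1$, hence $\|\mathcal{Q}^m\|_{\mathcal{G}}\le1$ for every $m$, and the same expansion with a single sign change gives $\|\mathcal{Q}_1^{-1}\|_{\mathcal{G}}\le1$. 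This is exactly the uniform power-boundedness the Duhamel sum needs, now taken in $\|\cdot\|_{\mathcal{G}}$. Feeding the truncation bound into the sum requires $\|\bm{\tau}^k\|_{\mathcal{G}}$, but this is the \emph{easy} direction of norm comparison: $\|\cdot\|_{\mathcal{G}}\le C\|\cdot\|_{\infty}$ with $C$ mesh-uniform, because the scaled discrete $L^2$ norm satisfies $\|\bm v\|_{\ell^2_h}\le\sqrt{L}\,\|\bm v\|_{\infty}$. Consequently $\|\bm{E}^n\|_{\mathcal{G}}=O(\Delta x^4)+O(\Delta t^2)$.

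The remaining hurdle is that the energy argument delivers the estimate in an $\ell^2$-type norm, whereas the theorem claims $l^{\infty}$, and that direction of equivalence carries a dimension-dependent constant. Here I would exploit the structure of the recovery step \eqref{eq:tanslated U to u}: the displacement error is obtained from the moment error through $\bm C^n=-\bm B^{-1}\bm A\,\bm\Psi^n$ (plus the recovery truncation, itself $O(\Delta x^4)$ and consistent by \eqref{D23}), and $\bm B^{-1}$ is, up to scaling, the inverse discrete Laplacian. Its one-dimensional discrete Green's function is uniformly bounded, giving a \emph{mesh-uniform} smoothing bound $\|\bm B^{-1}\bm A\|_{\ell^2_h\to \ell^\infty}\le C$. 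Composing this with the energy-norm estimate transfers the $O(\Delta x^4)+O(\Delta t^2)$ rate to $\|\mathbf{u}^n-\mathbf{v}^n\|_{l^{\infty}}$ for every $n\in\mathbf{T}$, completing the proof. I regard this final passage from the energy norm to the stated $l^{\infty}$ norm, uniformly in $N_x$, as the genuinely non-routine part; the telescoping and Taylor-expansion steps are standard, and the power-boundedness, once the dissipative weight $\mathcal{G}$ is identified, is immediate.
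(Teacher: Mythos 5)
Your proposal is correct, and it takes a genuinely different route from the paper --- in fact a far more complete one. The paper gives no proof body at all: it states the theorem and invokes the Lax--Richtmyer principle, taking its Stability Theorem as the hypothesis. You correctly identify why that is insufficient as it stands: Lax--Richtmyer requires mesh-uniform power boundedness $\sup_m\|\mathcal{Q}^m\|\le C$, whereas the paper only proves the spectral-radius bound $\rho(\mathcal{Q})\le 1$, and $\mathcal{Q}$ is non-normal, so the spectral radius does not control transient growth uniformly in $N_x$. Your energy weight repairs exactly this hole, and your key identity is correct: since $\mathcal{G}\mathcal{A}^{-1}=\mathrm{diag}(\bm I,\,EI\bm I)$, one gets
\[
\mathcal{G}\mathcal{A}^{-1}\mathcal{B}=\begin{pmatrix}-c\bm A & EI\bm B\\ -EI\bm B & \mathbf 0\end{pmatrix},\qquad
\mathcal{G}\mathcal{A}^{-1}\mathcal{B}+\left(\mathcal{A}^{-1}\mathcal{B}\right)^{*}\mathcal{G}=\begin{pmatrix}-2c\bm A & \mathbf 0\\ \mathbf 0 & \mathbf 0\end{pmatrix}\preceq\mathbf 0,
\]
using only the symmetry of $\bm A$ and $\bm B$ (the commutativity of Remark \ref{remark:the commmutative of A^{-1} and B} is not actually needed in this computation). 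The Cayley-transform contraction then gives $\|\mathcal{Q}\|_{\mathcal{G}}\le 1$ and $\|\mathcal{Q}_1^{-1}\|_{\mathcal{G}}\le 1$ (both expansions check out), the Duhamel sum converts the truncation bound of Lemma \ref{le:consistency of middle shceme} into $\|\bm V^n-\bm U^n\|_{\mathcal{G}}=O(\Delta x^4)+O(\Delta t^2)$, and the discrete Green's-function bound $\|\bm B^{-1}\|_{\ell^2_h\to\ell^\infty}\le L^{3/2}/4$ transfers the rate to the reconstructed displacement in the maximum norm --- a step the paper never addresses at all, since its consistency lemma concerns the $(\phi,\psi)$ system rather than $u$ itself. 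In short: the paper's approach buys brevity at the price of resting on a stability notion weaker than what its own citation requires; yours is self-contained, upgrades the Stability Theorem from a spectral statement to an unconditional, mesh-uniform norm contraction, and actually delivers the claimed $l^{\infty}$ estimate for $u$. Two small items to tidy, neither a real gap: (i) $\bm E^0=\mathbf 0$ presumes the initializations $\phi^0_i=\xi_2(x_i)$ and $\psi^0_i=\xi_1''(x_i)$ are taken exactly; (ii) in the recovery step \eqref{eq:tanslated U to u} the displacement error also picks up $\bm B^{-1}$ applied to the $O(\Delta x^4)$ truncation of the compact scheme \eqref{D23}, which is handled by the mesh-uniform bound $\|\bm B^{-1}\|_{\ell^\infty\to\ell^\infty}\le L^2/4$.
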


\section{Numerical Experiments}
In this section, we present the numerical results of the proposed method applied to three examples of beam equations existing in the literature. We note that all calculations are achieved using the JULIA language on a MacBook Pro computer equipped with the processor 1,8 GHz Intel Core i5. We test the accuracy and stability of the proposed method by using it with different values of  $\Delta t$ and $h$. The $L^{\infty}$ error obtained for the method is shown. We also calculate the computational orders of our method (denoted by \textit{C-Order}) using the following formula
\subsection{Example 1}
\begin{example}
    Set $EI = 98,\rho = 0.685,c =0.75$ in model \eqref{ebe} to \eqref{ebe-boundary-condition} and the force term $f$ is chosen such that exact solution is 
     \begin{equation}
        u(x,t) = \sin(\pi x)\cos(\pi t).
    \end{equation}
\end{example}

The exact solution ,numerical solution and error between exact and numerical solution, are plotted on 2\textit{D }for $h=0.01$ and  $t = 0.005$ in fig \ref{fig:example1}. The maximum absolute errors in the solution $u$ and convergence in time of example 1, are listed in Table \ref{tab:x error  order of example 1} for different values of $h$ at $t=1$. Fig \ref{fig:x order of example 1} and \ref{fig:t order of example 1} show the convergence order of Example 1 in space and time, respectively.
\begin{figure}[htbp]
    \centering
    \includegraphics[width=1\linewidth]{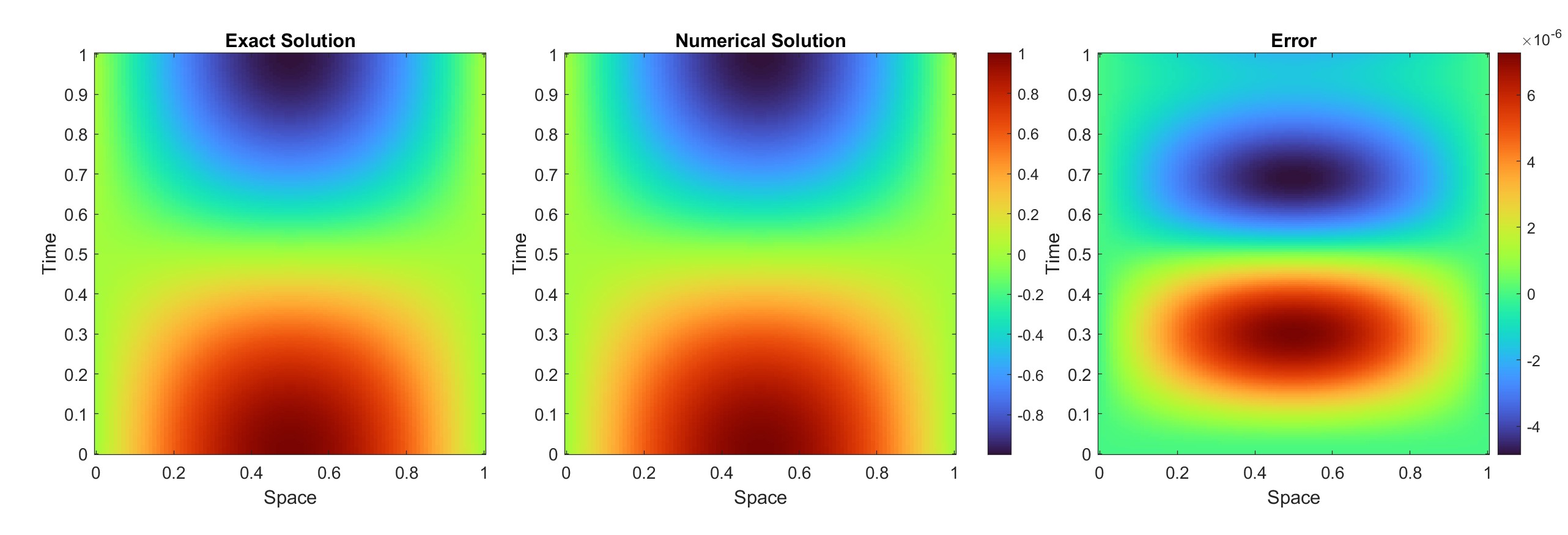}
    \caption{Example 1: Exact solution (left), Numerical solution (middle) and Error (right) for for $h = 0.01$ and $\Delta t = 0.005$}
    \label{fig:example1}
\end{figure}


\begin{table}[h]
    \centering
    \renewcommand{\arraystretch}{1.2} 
    \begin{tabular}{c c c c c} 
        \hline
        Mesh & $N$ & $h$ & $e_h$ & $C$-order \\ 
        \hline
        $Mesh_1$ & 32  & 0.0312& 6.634501648061786e-7
& -\\
        $Mesh_2$& 64  & 0.0156& 4.1534138461862824e-8
& 3.997618567152459
\\
        $Mesh_3$& 128 & 0.0078& 2.597110193569563e-9
& 3.999318496099914
\\
        $Mesh_4$& 256 & 0.0039& 1.6252299506192003e-10
& 3.9981914660506632
\\
        $Mesh_5$& 512 & 0.0020& 1.0099698855015049e-11& 4.008259675123924\\
        Average  &     &   &    &         4.000847051\\
        \hline
    \end{tabular}
    \caption{Example 1: $L^{\infty}$ errors for different values of $h$ and $\Delta t = h^2$ at $t = 1 $}
    \label{tab:x error  order of example 1}
\end{table}

\begin{figure}[h!]
    \centering
    \begin{minipage}{0.45\textwidth}
        \centering
\label{fig:x order of example 1}
        \includegraphics[width=1\textwidth]{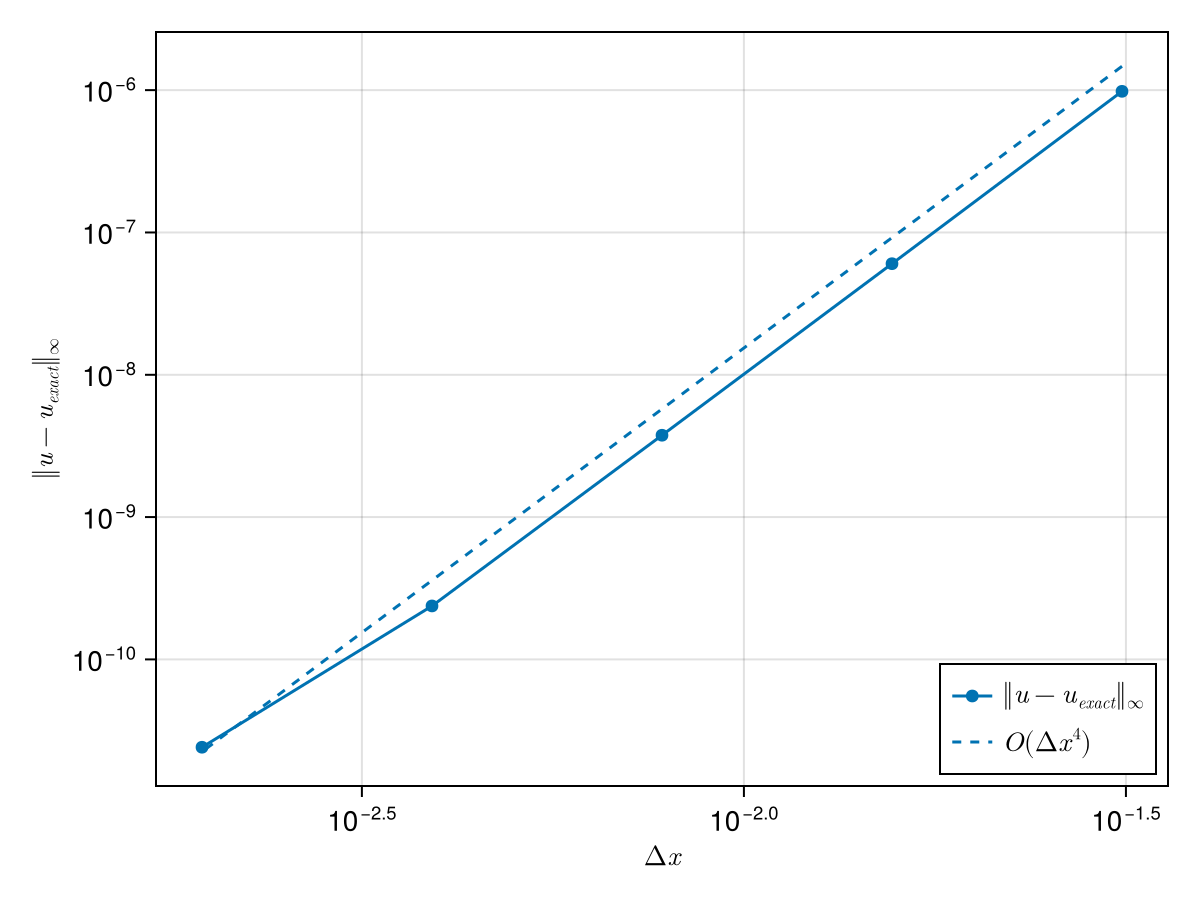}
        \caption{Example 1: Error order with $\Delta t$ of example 1}
    \end{minipage} \hfill
    \begin{minipage}{0.45\textwidth}
        \centering
        \label{fig:t order of example 1}
        \includegraphics[width=1\textwidth]{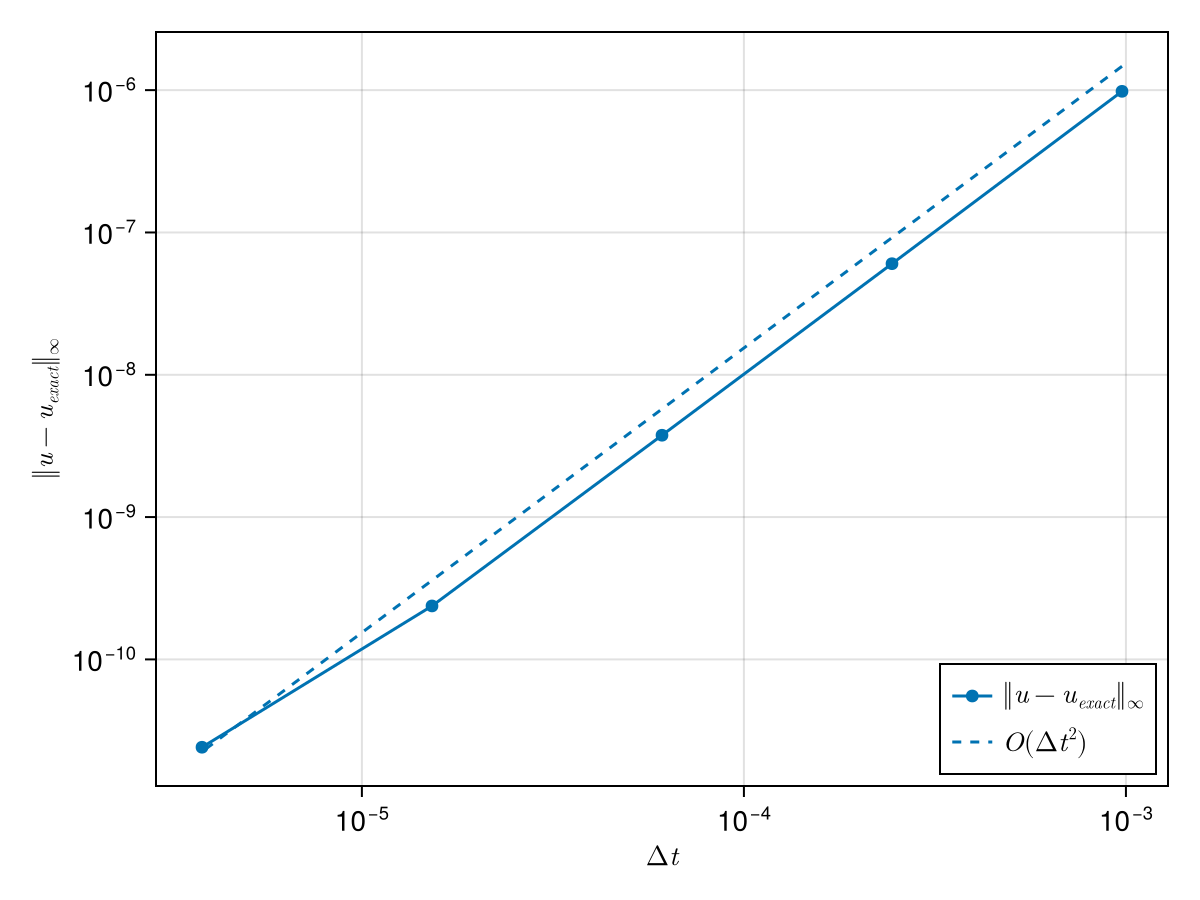}
        \caption{Example 1: Error order with $\Delta t$ of example 1}
    \end{minipage}
\end{figure}

\subsection{Example 2}
\begin{example}
     Set $EI = 1,\rho = 1,c =1$ in model \eqref{ebe} to \eqref{ebe-boundary-condition} and the force term $f$ is chosen such that exact solution is 
    \begin{equation}
    u(x,t) = \sinh(t)\cos(\pi x).
\end{equation}
\end{example}

The exact solution, numerical solution and error between exact and numerical solution, are plotted on 2\textit{D }for $h=0.01$ and  $t = 0.005$ in fig \ref{fig:example2}. The maximum absolute errors in the solution variable $u$ of example 2, are listed in Table \ref{tab:x error  order of example 2} for different values of $h$ at $t=1$. Fig \ref{fig:x order of example 2} and \ref{fig:t order of example 2} show the convergence order of Example 2 in space and time, respectively.
\begin{figure}[htbp]
    \centering
    \includegraphics[width=1\linewidth]{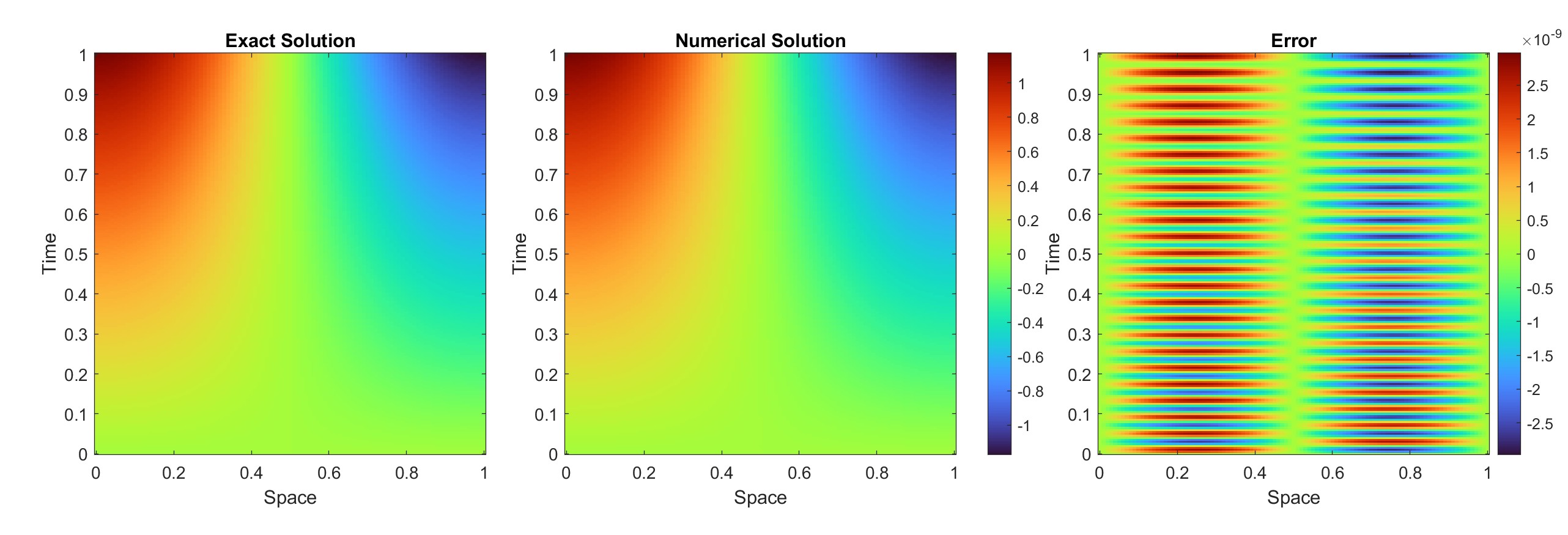}
    \caption{Example 2: Exact solution (left), Numerical solution (middle) and Error (right) for for $h = 0.01$ and $\Delta t = 0.005$}
    \label{fig:example2}
\end{figure}

\begin{table}[h]
    \centering
    \renewcommand{\arraystretch}{1.2} 
    \begin{tabular}{c c c c c} 
        \hline
        Mesh & $N$ & $h$ & $e_h$ & $C$-order \\ 
        \hline
        $Mesh_1$ & 32  & 0.0312& 1.182547729E-07
& -\\
        $Mesh_2$& 64  & 0.0156& 7.388819223E-09
& 4.000410771
\\
        $Mesh_3$& 128 & 0.0078& 4.620274163E-10
& 3.999293464
\\
        $Mesh_4$& 256 & 0.0039& 2.901023866E-11
& 3.993344393
\\
        $Mesh_5$& 512 & 0.0020& 1.859623566E-12
& 3.963479645
\\
        Average  &     &   &    &         3.989132068
\\
        \hline
    \end{tabular}
    \caption{Example 2: $L^{\infty}$ errors for different values of $h$ and $\Delta t = h^2$ at $t = 1 $}
    \label{tab:x error order of example 2}
\end{table}

\begin{figure}[h!]
    \centering
    \begin{minipage}{0.45\textwidth}
        \centering
        \label{fig:t order of example 2}
        \includegraphics[width=1\textwidth]{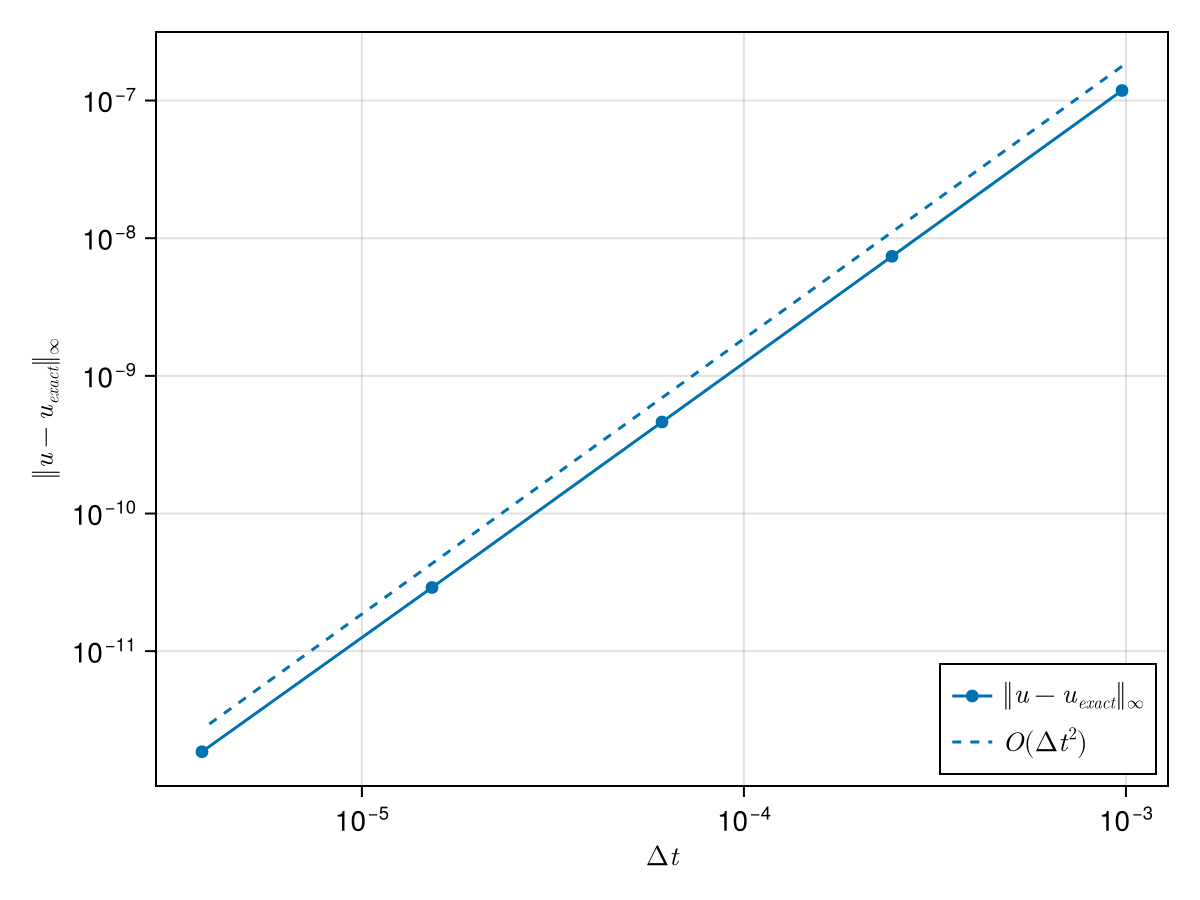}
        \caption{Error order with $\Delta t$ of example 2}
    \end{minipage} \hfill
    \begin{minipage}{0.45\textwidth}
        \centering
        \label{fig:x order of example 2}
        \includegraphics[width=1\textwidth]{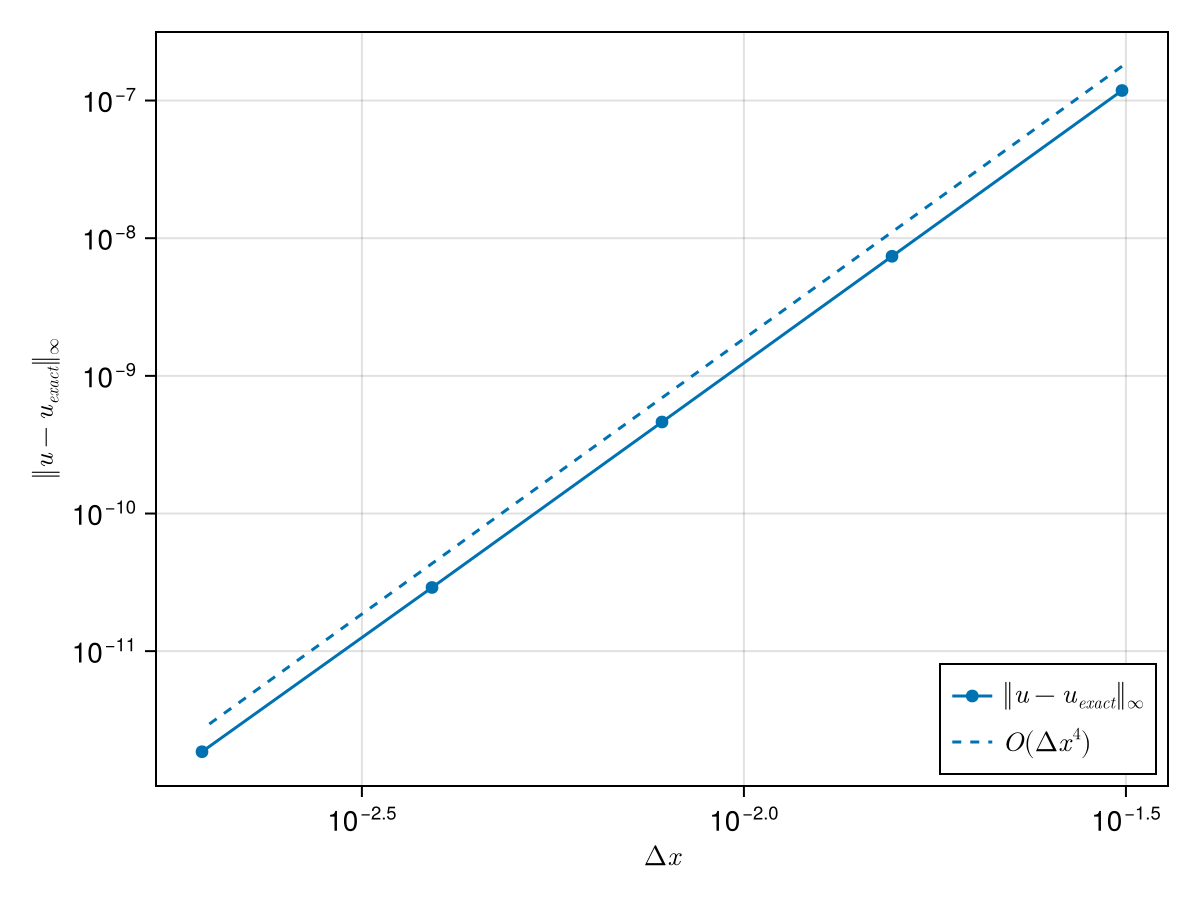}
        \caption{Error order with $\Delta t$ of example 2}
    \end{minipage}
\end{figure}

\subsection{Example 3}

\begin{example}
    Set $EI = 98,\rho = 0.68,c =7.5$ in model \eqref{ebe} to \eqref{ebe-boundary-condition} and the force term $f$ is chosen such that exact solution is 
\begin{equation}
    u(x,t) = e^{-t}\sin(\pi x).
 \end{equation}
\end{example}

The exact solution,numerical solution and error between exact and numerical solution, are plotted on 2\textit{D }for $h=0.01$ and  $t = 0.005$ in fig \ref{fig:example3}. The maximum absolute errors in the solution variable $u$ of example 1 , are listed in Table \ref{tab:x error  order of example 3} for different values of $h$ at $t=1$. Fig \ref{fig:x order of example 3} and \ref{fig:t order of example 3} show the convergence order of Example 1 in space and time, respectively.In the error map, super-convergence occurs when either the time step or the spatial step is small enough, probably because the spatial and temporal steps satisfy a specific ratio making the error order higher.
\begin{figure}[htbp]
    \centering
    \includegraphics[width=1\linewidth]{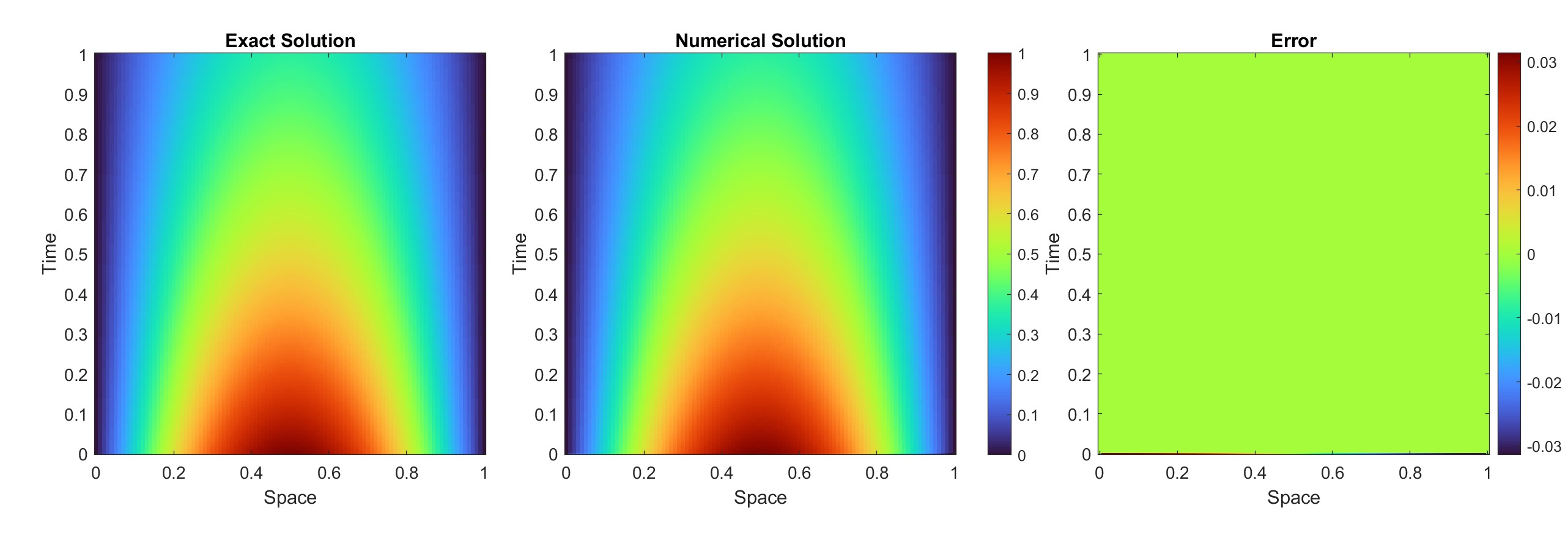}
    \caption{Example 3: Exact solution (left), Numerical solution (middle) and Error (right) for for $h = 0.01$ and $\Delta t = 0.005$}
    \label{fig:example3}
\end{figure}


\begin{table}[htbp]
    \centering
    \renewcommand{\arraystretch}{1.2} 
    \begin{tabular}{c c c c c} 
        \hline
        Mesh & $N$ & $h$ & $e_h$ & $C$-order \\ 
        \hline
        $Mesh_1$ & 32  & 0.0312& 2.849383778924519e-7
& -\\
        $Mesh_2$& 64  & 0.0156& 1.7771973226388127e-8
& 4.002974170882831
\\
        $Mesh_3$& 128 & 0.0078& 1.110404379556229e-9
& 4.000446804731633
\\
        $Mesh_4$& 256 & 0.0039& 6.894679271951532e-11
& 4.009457911233601
\\
        $Mesh_5$& 512 & 0.0020& 1.2934653348395386e-12& 5.736170146624661\\
        Average  &     &   &    &         4.437262258\\
        \hline
    \end{tabular}
    \caption{Example 3: $L^{\infty}$ errors for different values of $h$ and $\Delta t = h^2$ at $t = 1 $}
    \label{tab:x error order of example 3}
\end{table}

\begin{figure}[htbp]
    \centering
    \begin{minipage}{0.45\textwidth}
        \centering
        \label{fig:t order of example 3}
        \includegraphics[width=1\textwidth]{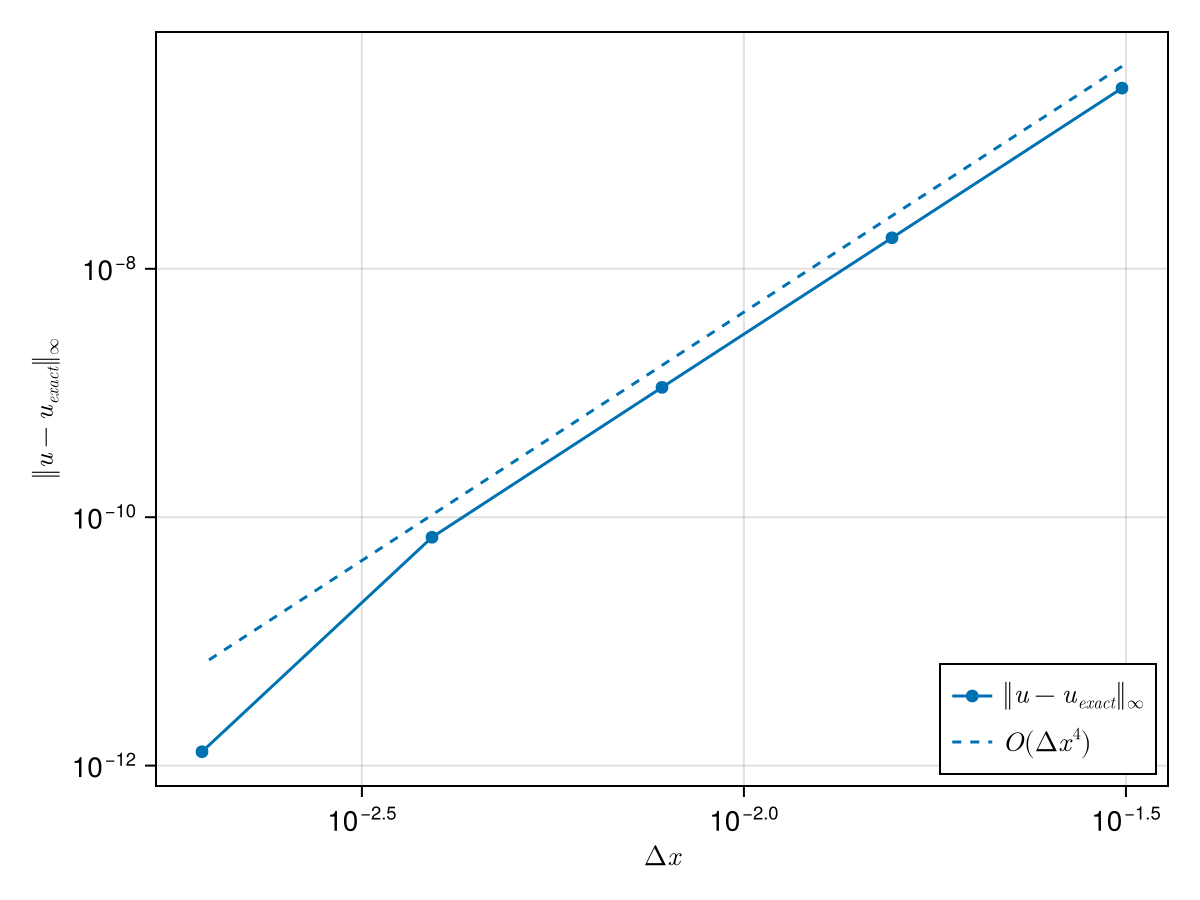}
        \caption{Error order with $\Delta t$ of example 3}
    \end{minipage} \hfill
    \begin{minipage}{0.45\textwidth}
        \centering
        \label{fig:x order of example 3}
        \includegraphics[width=1\textwidth]{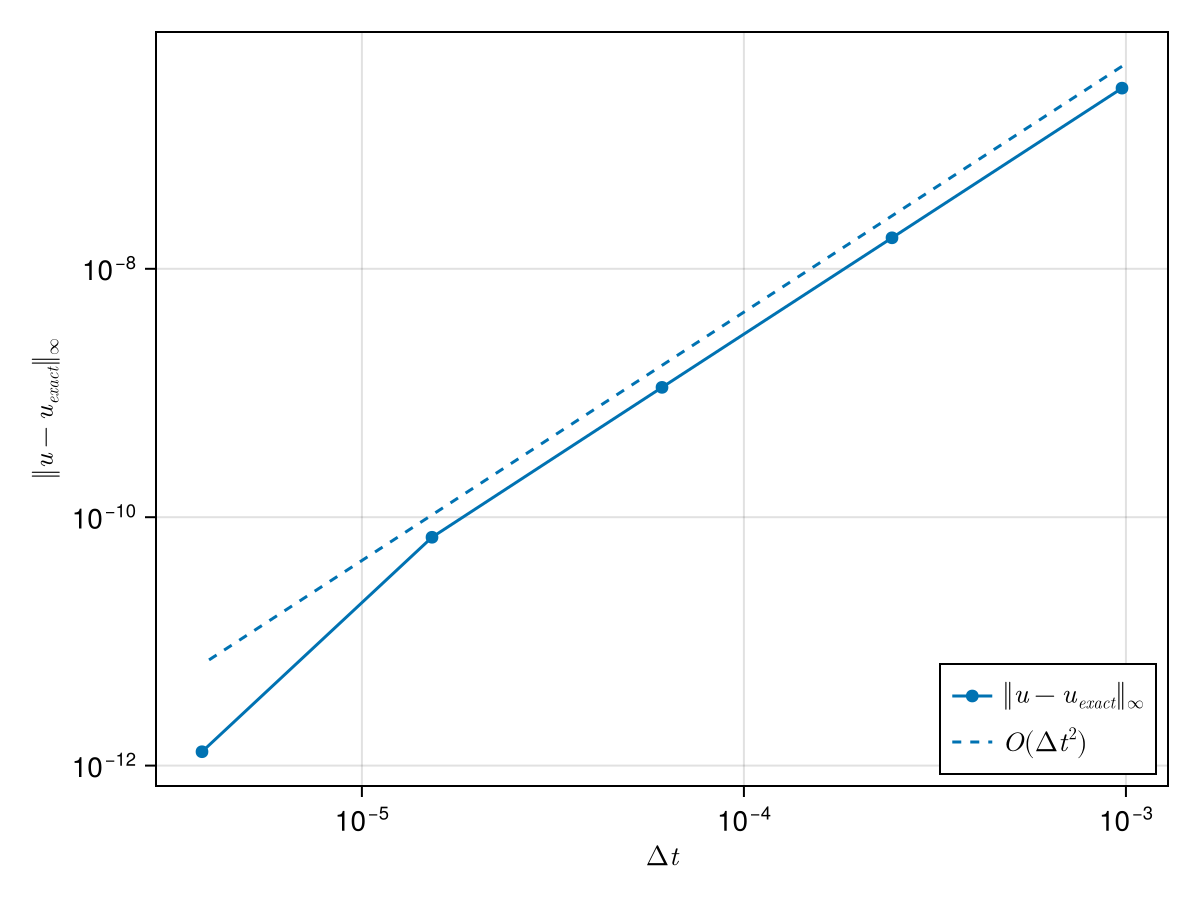}
        \caption{Error order with $\Delta t$ of example 3}
    \end{minipage}
\end{figure}

\section{Conclusion}
In the numerical solution of the classic Euler-Bernoulli beam equation, this paper considers the case of constant coefficients after adding damping terms, and uses the fourth-order compact difference format and the Crank-Nicoslon format finite difference method to solve it, solving the problem that damping terms are usually required in engineering. At the same time, this paper gives the consistency, stability and convergence proof of the compact finite difference method, and conducts a large number of numerical experiments to ensure the feasibility and universality of the method.

\bibliographystyle{unsrt}  
\bibliography{references}  






\end{document}